\documentclass[12pt,a4paper]{amsart}
\usepackage[matrix,arrow]{xy}
\usepackage{amssymb}
\usepackage{amsmath}
\usepackage{amsfonts}
\usepackage{epsfig}
\usepackage{color}
\usepackage{epstopdf}
\usepackage{graphicx}
\usepackage{amsthm}
\usepackage{enumerate}
\usepackage[mathscr]{eucal}
\usepackage{lipsum} 
\usepackage{verbatim}

\usepackage[bookmarks=true,hyperindex,pdftex,colorlinks,citecolor=blue,
linkcolor=blue,urlcolor=blue]{hyperref}
\usepackage{tikz}
\usetikzlibrary{matrix,arrows.meta}


\headheight=8pt   \topmargin=20pt \textheight=624pt
\textwidth=432pt \oddsidemargin=20pt \evensidemargin=20pt


\setcounter{MaxMatrixCols}{30} 
\theoremstyle{plain}
\newtheorem{theorem}{Theorem}[section]
\newtheorem{cor}[theorem]{Corollary}
\newtheorem{prop}[theorem]{Proposition}
\newtheorem{lemma}[theorem]{Lemma}

\theoremstyle{definition}

\newtheorem{remark}[theorem]{Remark}
\newtheorem{definition}[theorem]{Definition}



\newcommand{\Lin}{\mathcal{L}}
\newcommand{\K}{\mathcal{K}}

\newcommand{\eps}{\varepsilon}
\newcommand{\iten}{\widehat{\otimes}_{\eps}}

\DeclareMathOperator{\dist}{dist}

\DeclareMathOperator{\diam}{diam}

\DeclareMathOperator{\ACK}{ACK}

\DeclareMathOperator{\Ext}{Ext}

\DeclareMathOperator{\NA}{NA}

\renewcommand{\subset}{\subseteq}

\newcommand{\pten}{\widehat{\otimes}_\pi}


\title[A generalized ACK structure and norm attaining operators]
{A generalized ACK structure and the denseness of norm attaining operators}

\author[Choi]{Geunsu Choi}
\address[Choi]{Department of Mathematics Education, Dongguk University, Seoul 04620, Republic of Korea \newline
\href{http://orcid.org/0000-0002-4321-1524}{ORCID: \texttt{0000-0002-4321-1524} }}
\email{\texttt{chlrmstn90@gmail.com}}

\author[Jung]{Mingu Jung}
\address[Jung]{School of Mathematics, Korea Institute for Advanced Study, Seoul 02455, Republic of Korea \newline
\href{http://orcid.org/0000-0003-2240-2855}{ORCID: \texttt{0000-0003-2240-2855} }}
\email{\texttt{jmingoo@kias.re.kr}}
\urladdr{\url{https://clemg.blog/}}

\keywords{Banach space, Norm attainment, Uniform algebra}
\subjclass[2020]{Primary: 46B20; Secondary: 46B04, 46E25}

\date{\today}                      


\begin{document}

\begin{abstract}
Inspired by the recent work of Cascales et al., we introduce a generalized concept of ACK structure on Banach spaces. 
Using this property, which we call by the quasi-ACK structure, we are able to extend known universal properties on range spaces concerning the density of norm attaining operators. We provide sufficient conditions for quasi-ACK structure of spaces and results on the stability of quasi-ACK structure. As a consequence, we present new examples satisfying (Lindenstrauss) property B$^k$, which have not been known previously. We also prove that property B$^k$ is stable under injective tensor products in certain cases. 
Moreover, ACK structure of some Banach spaces of vector-valued holomorphic functions is also discussed, leading to new examples of universal BPB range spaces for certain operator ideals.
\end{abstract}

\maketitle

\section{Introduction}

Lindenstrauss \cite{Lin} established the concept of approximation on operators between Banach spaces by those which attain their norm in his contribution to the theory of norm attaining operators. Namely, he gave a number of criteria on Banach spaces for which the set of norm attaining operators is dense in the space of bounded linear operators. In particular, so called \emph{\textup{(}Lindenstrauss\textup{)} property \textup{B}} on a range space, the property which states that the set of norm attaining operators is dense for an arbitrary domain space, was considered in his seminal work and a sufficient condition was studied by giving certain constraints on the range space. This property plays a fundamental role in the field of norm attaining theory in view of geometric aspect of Banach spaces, and one of the significant branches along this line is about \emph{property \textup{B}$^k$}, a parallel study restricted to the compact operators. We invite the readers to \cite{AAP2,CGK,DGMM,JW1,M2014} for the study of norm attaining compact operators, and also to a survey \cite{M2016}. 
It was recently observed in \cite{CGKS2018} that if a Banach space satisfies a specific structural property, so-called \emph{$\ACK$ structure}, then it can be said to be more than just a universal range space of denseness. The primary goal of this article is to generalize their concept while retaining the essential properties of denseness, allowing us to provide new constructive examples.

Let us introduce the entire plot of contents once more in detail this time. For (real or complex) Banach spaces $X$ and $Y$, we will use the notation $S_X, B_X, X^*$ for the unit sphere, the closed unit ball and the dual space of $X$, respectively, $\Ext (B_X)$ for the set of extreme points of $B_X$, $\Lin (X,Y), \mathcal{K}(X,Y)$ for the space of bounded linear operators and of compact linear operators from $X$ to $Y$, respectively, and $\NA (X, Y)$ for the set of all norm attaining operators in $\Lin (X,Y)$. For simplicity, we abbreviate $\Lin (X,X)$ by $\Lin (X)$.

The \emph{Bishop-Phelps-Bollob\'as property} (for short, \textup{BPBp}), introduced in 2008 \cite{AAGM} as a vector-valued version of the classical Bishop-Phelps-Bollob\'as theorem \cite{B}, has attracted considerable attention of many authors over the years as it extends the classical denseness of norm attaining operators in quantitative ways. A pair $(X,Y)$ of Banach spaces is said to have the \textup{BPBp} for operators if for every $\eps >0$, there exists $\eta(\eps) >0$ such that for every $T \in \Lin (X,Y)$ with $\|T\|=1$, if $x_0 \in S_X$ satisfies $\|T(x_0)\| > 1 -\eta (\eps)$, then there exist a point $u_0 \in S_X$ and an operator $S \in \Lin (X,Y)$ satisfying that $\|S(u_0)\| = \|S\|=1$, $\|u_0 - x_0 \| < \eps$, and $\|S - T\|<\eps$.

In 2018, Cascales, Guirao, Kadets, and Soloviova \cite{CGKS2018} introduced a new structure of Banach spaces, called {$\ACK_\rho$ structure} by extracting the structural properties of $C(K)$-spaces and uniform algebras. In order to explain their result, we need to recall some definition: a function $f$ from a topological space $\mathcal{T}$ to a metric space $M$ is said to be \emph{openly fragmented} if for every nonempty open subset $U \subseteq \mathcal{T}$ and every $\eps >0$ there exists a nonempty open subset $V \subseteq U$ with $\text{diam} (f(V)) < \eps$. Given Banach spaces $X, Y$ and a set $\Gamma \subseteq Y^*$, an operator $T \in \Lin (X,Y)$ is said to be $\Gamma$-\emph{flat} if $T^* \vert_{\Gamma} : (\Gamma, w^*) \rightarrow X^*$ is openly fragmented, and a subspace $\mathcal{I} \subseteq \mathcal{L}(X,Y)$ is said to be a \emph{$\Gamma$-flat ideal} if every element of $\mathcal{I}$ is $\Gamma$-flat, $\mathcal{I}$ contains all finite rank operators and $FT \in \mathcal{I}$ for every $T \in \mathcal{I}$ and $F \in \Lin(Y)$.
It is known that if $T \in \Lin (X,Y)$ is an Asplund operator, then the set $T^*( B_{Y^*} )$ is a $w^*$-compact subset of $(X^*, w^*)$ and it is fragmented by the norm \cite{Stegall}. That is, each Asplund operator (in particular, compact or weakly compact operator) $T \in \Lin (X,Y)$ is $\Gamma$-flat for every subset $\Gamma \subseteq B_{Y^*}$ (see also \cite[Example A]{CGKS2018}).

A Banach space $X$ has \emph{$\ACK_\rho$ structure with parameter $\rho \in [0,1)$} whenever there exists a $1$-norming set $\Gamma \subseteq B_{X^*}$ such that for every $\eps >0$ and every nonempty relatively $w^*$-open subset $U \subseteq \Gamma$, there exist a nonempty subset $V \subseteq U$, points $x_1^* \in V$, $e \in S_X$ and an operator $F \in \Lin (X)$ with the following properties:
\begin{enumerate}
\setlength\itemsep{0.3em}
\item[\textup{(i)}] $\|F(e)\| = \|F \| = 1$;
\item[\textup{(ii)}] $x_1^*(F(e))=1$,
\item[\textup{(iii)}] $F^*(x_1^*)=x_1^*$,
\item[\textup{(iv)}] If we let $V_1 := \{ x^* \in \Gamma : \|F^*(x^*)\| + (1-\eps) \|x^* - F^*(x^*)\| \leq 1 \}$, then $|x^*(F(e))| \leq \rho$ for any $x^* \in \Gamma \setminus V_1$,
\item[\textup{(v)}] $\dist (F^*(x^*), \operatorname{aco}\{0,V\}) < \eps$ for all $x^* \in \Gamma$,
\item[\textup{(vi)}] $|v^*(e)-1| \leq \eps$ for all $v^* \in V$.
\end{enumerate} 
If the set $V_1$ given in (iv) is empty, then we write simply by $X$ has \emph{$\ACK$ structure}. 

Cascales et al. showed in \cite{CGKS2018} that uniform algebras and Banach spaces with property $\beta$ have $\ACK_\rho$ structure, and presented several stability results concerning $\ACK_\rho$ structure. Moreover, they proved that a pair \emph{$(X,Y)$ has the \textup{BPBp} for operators in a $\Gamma$-flat ideal $\mathcal{I}$} provided that $Y$ has $\ACK_\rho$ structure with the corresponding $1$-norming set $\Gamma \subseteq B_{X^*}$, which unifies and extends several results in \cite{AAGM, ABGKM, ACK, CGK}. The notion of the \textup{BPBp} for operators in $\mathcal{I} \subseteq \mathcal{L}(X,Y)$ is an analogue of the original BPBp with restricting $T, S \in \mathcal{I}$. In other words, if for every $\eps >0$, there is $\eta(\eps) >0$ such that for every $T \in \mathcal{I}$ with $\|T\|=1$, if $x_0 \in S_X$ satisfies $\|T(x_0)\| > 1 -\eta (\eps)$, then there exist a point $u_0 \in S_X$ and an operator $S \in \mathcal{I}$ satisfying that $\|S(u_0)\| = \|S\|=1$, $\|u_0 - x_0 \| < \eps$, and $\|S - T\|<\eps$, then we say that the pair $(X,Y)$ has the BPBp for operators in $\mathcal{I}$.

In this paper, we introduce a new structural Banach space property which we call by \emph{quasi-$\ACK$ structure}. We show that this notion is more general than $\ACK_\rho$ structure, while the same property on a range space still implies the denseness of norm attaining operators in some $\Gamma$-flat ideals for an arbitrary domain space. Let us mention that the idea behind this notion comes from the paper of Acosta, Aguirre and Pay\'a \cite{AAP2}, where {property quasi-$\beta$} was introduced as a property which is (strictly) weaker than so called property $\beta$. Recall that a Banach space $Y$ has \emph{property quasi-$\beta$} if there exist subsets $A = \{ y_\lambda^*: \lambda \in \Lambda\} \subseteq S_{Y^*}$, $\{y_\lambda : \lambda \in \Lambda\} \subseteq S_Y$ and a real-valued function $\rho$ on $A$ satisfying 
\begin{enumerate}
\setlength\itemsep{0.3em}
\item[\textup{(i)}] $y_\lambda^* (y_\lambda) = 1$ for every $\lambda \in \Lambda$,
\item[\textup{(ii)}] $|y_\mu^* (y_\lambda)| \leq \rho (y_\lambda^*) < 1$ whenever $\lambda, \mu \in \Lambda$ with $\lambda \neq \mu$,
\item[\textup{(iii)}] for every $e^* \in \Ext (B_{Y^*})$, there is a subset $A_{e^*} \subseteq A$ such that $e^* \in \overline{\mathbb{T} A_{e^*}}^{w^*}$ and $\sup \{ \rho (y^*):y^* \in A_{e^*}\} < 1$, where $\mathbb{T} = \{t \in \mathbb{K} : |t|=1 \}$.
\end{enumerate} 
If the function $\rho : A \rightarrow [0,1)$ is bounded above by some $\sigma \in [0,1)$ and each $A_{e^*}$ can be chosen to be $A$ (equivalently, $A$ is norming), this definition turns out to coincide with \emph{property $\beta$}, which was originally introduced by Lindenstrauss. Actually in his seminal paper \cite{Lin}, a Banach space $Y$ is defined to have \emph{property \textup{B}} if the set $\NA (X,Y)$ is dense in $\Lin (X,Y)$ for every Banach space $X$, and it is proved that property $\beta$ is a sufficient condition for property \textup{B}. 
Typical examples of Banach spaces which have property $\beta$ are polyhedral finite dimensional Banach spaces and closed subspaces of $\ell_\infty$ containing the canonical copy of $c_0$. 
The problem of Banach spaces with property \textup{B} is later considered in the context of compact operators. Following \cite{M2016}, we say that $Y$ has property $\textup{B}^k$ if $\NA(X,Y) \cap \K (X,Y)$ is dense in $\K (X,Y)$ for every domain space $X$. The space $C[0,1]$ for instance has property \textup{B}$^k$ but fails property \textup{B} \cite{S}, and it was proved in \cite{M2014} that there is a Banach space which fails property \textup{B}$^k$. However, it is still open whether property \textup{B} implies property \textup{B}$^k$ or not.

The aim of this paper is to show that the notion of quasi-$\ACK$ structure implies property \textup{B} for some $\Gamma$-flat ideal $\mathcal{I}$, that is, every operator in $\Lin (X,Y)$ belonging to $\mathcal{I}$ can be approximated by norm attaining operators in the same ideal for every Banach space $X$ whenever a Banach space $Y$ has quasi-$\ACK$ structure. Moreover, we observe that quasi-$\ACK$ structure is a non-trivial extension of $\ACK_\rho$ structure by showing the existence of a Banach space having quasi-$\ACK$ structure but lacks $\ACK_\rho$ structure and property quasi-$\beta$. The existence of such a space yields a new example of Banach spaces that satisfies property \textup{B}$^k$. Next, we focus our attention on the inheritance of quasi-$\ACK$ structure from a Banach space $Z$ to a vector-valued function space with target space $Z$. We obtain in addition some new results on $\ACK_\rho$ structure of some spaces of holomorphic functions between Banach spaces. Furthermore, we prove that property $\beta$, property quasi-$\beta$, and quasi-$\ACK$ structure are stable under injective tensor products. As a related result, we also show that property \textup{B}$^k$ of a reflexive space implies property \textup{B}$^k$ of the injective tensor product between itself and an $L_1$-predual space. Finally, we give a direct proof of quasi-$\ACK$ structure of the $c_0$-sum of spaces having quasi-$\ACK$ structure, which actually yields $\ACK_\rho$ structure of an intermediate space between the $c_0$-sum and $\ell_\infty$-sum of a family $(X_i)_{i\in I}$ of Banach spaces having $\ACK_\rho$ structure. 

\section{Basic properties of quasi-ACK structure}

Let us present the promised definition of quasi-$\ACK$ structure.

\begin{definition}\label{def:qACK}
Let $X$ be a Banach space. $X$ is said to have \emph{quasi-$\ACK$ structure} if there exist a $1$-norming set $\Gamma \subseteq B_{X^*}$ and a function $\rho: \Gamma \to [0,1)$ such that for any $e^* \in \operatorname{Ext}(B_{X^*})$, there exists $\Gamma_{e^*} \subseteq \Gamma$ satisfying that
\begin{enumerate}
\setlength\itemsep{0.3em}
\item[\textup{(i)}] $e^* \in \overline{\mathbb{T} \Gamma_{e^*}}^{w^*}$,
\item[\textup{(ii)}] $\sup_{x^* \in \Gamma_{e^*}} \rho(x^*) <1$,
\item[\textup{(iii)}] for every $\eps>0$ and a nonempty relatively $w^*$-open subset $U$ of $\Gamma_{e^*}$, there are a nonempty subset $V \subseteq U$, $x_1^* \in V$, $e \in S_X$, $F \in \Lin(X)$ such that 
\item[\textup{(i)'}] $\|F(e)\|=\|F\|=1$,
\item[\textup{(ii)'}] $x_1^*(F(e))=1$,
\item[\textup{(iii)'}] $F^*(x_1^*)=x_1^*$,
\item[\textup{(iv)'}] If we let $V_1 := \{ x^* \in \Gamma : \|F^*(x^*)\| + (1-\eps) \|x^* - F^*(x^*)\| \leq 1 \}$, then $|x^*(F(e))| \leq \rho(x_1^*)$ for any $x^* \in \Gamma \setminus V_1$,
\item[\textup{(v)'}] $\dist (F^*(x^*), \operatorname{aco}\{0,V\}) < \eps$ for all $x^* \in \Gamma$,
\item[\textup{(vi)'}] $|v^*(e)-1| \leq \eps$ for all $v^* \in V$.
\end{enumerate}
\end{definition}

\begin{remark}\label{rem:basic} Let $X$ be a Banach space. 
\begin{enumerate}
\setlength\itemsep{0.3em}
\item[\textup{(a)}] If $X$ has $\ACK_\rho$ structure for some $\rho \in [0,1)$, then $X$ has quasi-$\ACK$ structure.
\item[\textup{(b)}] If $X$ has property quasi-$\beta$, then $X$ has quasi-$\ACK$ structure.
\item[\textup{(c)}] There is a Banach space $X$ with quasi-$\ACK$ structure but without $\ACK_\rho$ structure for any $\rho \in [0,1)$. 
\end{enumerate} 
\end{remark}

\begin{proof}
(a). Let $\Gamma$ be a $1$-norming set from the definition of $\ACK_\rho$ structure. 
Consider the value $\rho$ as a constant function from $\Gamma$ to $[0,1)$ and take $\Gamma_{e^*}=\Gamma$ for any $e^* \in \operatorname{Ext}(B_{X^*})$.

(b). Take subsets $A = \{x_\alpha^*: \alpha \in \Lambda\} \subseteq S_{X^*}$, $\{x_\alpha : \alpha \in \Lambda\} \subseteq S_X$ and a real-valued function $\rho$ on $A$ satisfying the conditions in the definition of property quasi-$\beta$. Then $X$ has quasi-$\ACK$ structure with respect to the $1$-norming set $\Gamma:= A$ and the above function $\rho$. Indeed, for $e^* \in \operatorname{Ext}(B_{X^*})$, let $\Gamma_{e^*}$ be the corresponding subset of $\Gamma$ in the definition of property quasi-$\beta$. Given any $\eps>0$ and a nonempty relatively $w^*$-open set $U \subseteq \Gamma_{e^*}$, fix $x_{\alpha_0}^* \in U$ and we may take
$$
V := \{x_{\alpha_0}^*\} \subseteq U, \quad x_1^* := x_{\alpha_0}^*, \quad e := x_{\alpha_0} \quad \text{and} \quad F(x) := x_{\alpha_0}^*(x) x_{\alpha_0} \in \Lin(X).
$$
It is routine to show that (i)-(iii) hold.

(c). Let $X$ be the Banach space considered in \cite[Example 4.1]{ACKLM}, which has property quasi-$\beta$ and satisfies that $(\ell_1^2, X)$ fails to have the \textup{BPBp} for operators. It follows from \cite[Theorem 3.4]{CGKS2018} that $X$ fails to have $\ACK_\rho$ structure for any $\rho \in [0,1)$. 
\end{proof}

As mentioned in Introduction, we shall observe later that there exists a Banach space $X$ which is of quasi-$\ACK$ structure but fails to have not only $\ACK_\rho$ structure for any $\rho \in [0,1)$ but also property quasi-$\beta$ (see Proposition \ref{prop:qACKbut}). On the one hand, the following theorem shows that the construction of quasi-$\ACK$ structure is very natural compared to the defintion of property quasi-$\beta$ and does not harm the essential property on the denseness of norm attaining operators.

\begin{theorem}\label{thm:denseness}
Let $X$ and $Y$ be Banach spaces such that $Y$ has quasi-$\ACK$ structure with a $1$-norming set $\Gamma \subseteq B_{Y^*}$ and $\rho : \Gamma \rightarrow [0,1)$. Let $T \in \Lin(X,Y)$ be a $\Gamma$-flat operator such that $T$ is $\Gamma_0$-flat for every $\Gamma_0 \subseteq \Gamma$. Then, $T$ can be approximated by norm attaining operators.
\end{theorem}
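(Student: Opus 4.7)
The plan is to adapt the proof of \cite[Theorem~3.2]{CGKS2018}, which establishes the $\ACK_\rho$ case. The essentially new ingredient is a preliminary step that fixes an extreme point of $B_{Y^*}$, after which the whole argument takes place inside a subset $\Gamma_{e^*}\subseteq\Gamma$ on which $\rho$ is uniformly bounded away from $1$. Normalize $\|T\|=1$ and fix the target accuracy $\eps>0$. Pick $z_0\in S_X$ with $\|Tz_0\|>1-\eta$ for some $\eta>0$ to be specified. By Hahn--Banach, the face $\{y^*\in B_{Y^*}:y^*(Tz_0)=\|Tz_0\|\}$ is a non-empty $w^*$-compact convex set, and Krein--Milman produces an extreme point $e^*$ of this face, which is automatically extreme in $B_{Y^*}$ and satisfies $|e^*(Tz_0)|>1-\eta$. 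Quasi-$\ACK$ structure at $e^*$ then supplies $\Gamma_{e^*}\subseteq\Gamma$ with $e^*\in\overline{\mathbb{T}\Gamma_{e^*}}^{w^*}$ and $\sigma:=\sup_{x^*\in\Gamma_{e^*}}\rho(x^*)<1$. Because the $w^*$-open neighborhood $W_0:=\{y^*:|y^*(Tz_0)|>1-\eta\}$ of $e^*$ is $\mathbb{T}$-invariant, the condition $e^*\in\overline{\mathbb{T}\Gamma_{e^*}}^{w^*}$ forces $W_0\cap\Gamma_{e^*}\neq\emptyset$, providing some $x_0^*\in\Gamma_{e^*}$ with $|x_0^*(Tz_0)|>1-\eta$.

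Next I would invoke the hypothesis that $T$ is $\Gamma_{e^*}$-flat, so that $T^*|_{\Gamma_{e^*}}$ is openly fragmented. Applied to the non-empty relatively $w^*$-open set $U_0:=W_0\cap\Gamma_{e^*}$, this yields a smaller relatively $w^*$-open $U\subseteq U_0$ on which $\diam(T^*(U))$ is below any prescribed threshold $\delta>0$. Condition (iii) of quasi-$\ACK$ structure, applied to $U$ with a sufficiently small auxiliary parameter, then delivers $V\subseteq U$, $x_1^*\in V$, $e\in S_Y$ and $F\in\Lin(Y)$ satisfying (i)'--(vi)'. Crucially, $x_1^*\in V\subseteq\Gamma_{e^*}$, so $\rho(x_1^*)\leq\sigma<1$ and $|x_1^*(Tz_0)|>1-\eta$.

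With this data I would construct $S\in\Lin(X,Y)$ as a small perturbation of $T$ built from $F$, $e$, $x_1^*$ and $z_0$, following the construction in \cite[Theorem~3.2]{CGKS2018}, so designed that $S$ attains its norm at (or very close to) $z_0$ with the peaking direction $e$. The bounds on $\|S\|$ and $\|S-T\|$ proceed by splitting each test functional $x^*\in\Gamma$ according to whether $x^*\in V_1$ or $x^*\in\Gamma\setminus V_1$: on $V_1$, the defining inequality of $V_1$ controls $\|x^*-F^*x^*\|$ in terms of $\eps$; on $\Gamma\setminus V_1$, condition (iv)' gives the strict bound $|x^*(Fe)|\leq\rho(x_1^*)\leq\sigma$, which is exactly what is needed to absorb the contribution of the perturbation term. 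Combined with (v)'--(vi)' and the small diameter of $T^*$ on $U$, these estimates yield $\|S-T\|<\eps$ together with the equality $\|S\|=\|S(z_0)\|$, so that $S$ is norm-attaining.

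The principal obstacle is the replacement of the uniform constant $\rho$ of the $\ACK_\rho$ definition by the \emph{function} $\rho$ in the quasi-$\ACK$ setting: the estimates close only because the single value $\rho(x_1^*)$ entering (iv)' is bounded by $\sigma<1$, and securing this uniformly is exactly why the selection of the extreme point $e^*$ and the restriction to $\Gamma_{e^*}$ are built into Definition~\ref{def:qACK}. A secondary subtlety is that the openly-fragmented property must be invoked on the specific subset $\Gamma_{e^*}$ rather than on all of $\Gamma$, which explains why the theorem strengthens the usual $\Gamma$-flat hypothesis by asking that $T$ be $\Gamma_0$-flat for every $\Gamma_0\subseteq\Gamma$.
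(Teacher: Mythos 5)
Your proposal is correct and follows the same skeleton as the paper's proof: select an extreme point $e^*\in\Ext(B_{Y^*})$ at which $T^*$ is (almost) norming, pass to $\Gamma_{e^*}$, use $\Gamma_{e^*}$-flatness to shrink to a $w^*$-open set on which $T^*$ has small diameter, apply condition (iii) there, and then run the operator construction of \cite[Theorem 3.2]{CGKS2018}, the one genuinely new estimate being that $\rho(x_1^*)\le\sigma<1$ uniformly on $\Gamma_{e^*}$, which is exactly how the paper closes the bound on $\Gamma\setminus V_1$ (it takes the perturbation parameter $\tilde\eps=5\eps/(1-\sigma+5\eps)$). The one place you diverge is the very first step: the paper invokes Johannesen's theorem \cite[Theorem 5.8]{Lima} to get an extreme point at which $T^*$ \emph{exactly} attains its norm, whereas you produce $e^*$ by Krein--Milman applied to the attainment face $\{y^*\in B_{Y^*}:y^*(Tz_0)=\|Tz_0\|\}$ of an almost-norming $z_0$. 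Your variant is more elementary and self-contained (it avoids the citation entirely), at the cost of carrying an extra parameter $\eta$; since the subsequent argument only ever uses $|y_0^*(Tx_0)|>1-\eps$ for some $y_0^*\in\Gamma_{e^*}$, nothing is lost. One small inaccuracy: the perturbed operator $S(x)=x_r^*(x)F(e)+(1-\tilde\eps)(I_Y-F)T(x)$ does not attain its norm at or near $z_0$; it attains it at the point $x_r$ where the Bishop--Phelps approximant $x_r^*$ of $T^*(y_1^*)$ attains \emph{its} norm. That Bishop--Phelps step is essential (since $T^*(y_1^*)$ itself need not attain its norm) and is only implicit in your deferral to the CGKS construction, but for plain denseness of norm attaining operators no proximity of $x_r$ to $z_0$ is required, so this does not affect the validity of the argument.
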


\begin{proof}
Let $\eps>0$ be given and assume that $\|T\|=1$. By Johannesen \cite[Theorem 5.8]{Lima}, $T^* \in \Lin(Y^*,X^*)$ attains its norm at some $e^* \in \operatorname{Ext}(B_{Y^*})$. Let $\Gamma_{e^*} \subset \Gamma$ be the corresponding set of quasi-$\ACK$ structure of $Y$ satisfying (i)-(iii). From that $\|T^*(e^*)\|=1$, there exist $y_0^* \in \Gamma_{e^*}$ and $x_0 \in S_X$ such that $|y_0^*(T(x_0))| > 1-\eps$ by (i). If we fix
$$
U_0 := \{y^* \in Y^* : |y^*(T(x_0))| >1-\eps\},
$$
then $U_0$ is $w^*$-open and $U_0 \cap \Gamma_{e^*} \neq \emptyset$. Since $T$ is $\Gamma_{e^*}$-flat from the hypothesis, there exists $w^*$-open $U_r \subseteq U_0$ with $U_r \cap \Gamma_{e^*} \neq \emptyset$ such that $\diam(T^*(U_r \cap \Gamma_{e^*})) < \eps$. Choose any $y_1^* \in U_r \cap \Gamma_{e^*}$ and let $x_1^* := T^*(y_1^*)$. By the Bishop-Phelps theorem, we may find $x_r^* \in \NA(X,\mathbb{K})$ with $\|x_r^*\|=1$ such that $\|x_r^*-x_1^*\|<\eps$. Therefore, we can deduce that
\begin{equation}\label{equation:nbh}
\|T^*(z^*) - x_r^*\| \leq \|T^*(z^*) - T^*(y_1^*)\| + \|x_1^* - x_r^*\| < 2\eps \quad \text{for all } z^* \in U_r \cap \Gamma_{e^*}.
\end{equation}
Let us say that $x_r^*$ attains its norm at $x_r \in S_X$.

Now, by quasi-$\ACK$ structure of $Y$, applying to $\eps>0$ and $U := U_r \cap \Gamma_{e^*}$, we are able to find a nonempty $V \subseteq U$, $y_2^* \in V$, $e \in S_Y$ and $F \in \Lin(Y)$ satisfying (i)'-(vi)'. Put $V_1 = \{x^* \in \Gamma : \|F^*(x^*)\| + (1-\eps) \|x^* - F^*(x^*)\| \leq 1 \}$. 
Define $S \in \Lin(X,Y)$ by
$$
S(x) := x_r^*(x) F(e) + (1-\tilde{\eps}) (I_Y - F) T(x) \qquad \text{for } x \in X,
$$
where $\tilde{\eps} := \frac{5\eps}{1-r_{e^*}+5\eps}$. Here, $r_{e^*} := \sup_{y^* \in \Gamma_{e^*}} \rho(y^*)<1$ from (ii). Note first that $\|S\| = \sup_{y^* \in \Gamma} \|S^*(y^*)\|$. If $y^* \in V_1$, then
\begin{align*}
\|S^*(y^*)\| &= \|y^*(F(e)) x_r^* + (1-\tilde{\eps}) T^*(y^* - F^*(y^*))\| \\
&\leq \|F^*(y^*)\| + (1-\tilde{\eps}) \|y^*-F^*(y^*)\| \leq 1.
\end{align*}
According to (v)' and (vi)', we have for any $y^* \in \Gamma$ an estimate $\|F^*(y^*)-v^*\|<\eps$ where $v^* = \sum_{k=1}^n \lambda_k v_k^*$ with $\{v_k^*\}_{k=1}^n \subseteq V$ and $\sum_{k=1}^n |\lambda_k| \leq 1$. Hence we have from \eqref{equation:nbh} and (vi)' that
$$
\|v^*(e) x_r^* - T^*(v^*)\| \leq |v^*(e)-1| + \left| \sum_{k=1}^n \lambda_k (x_r^* - T(v_k^*)) \right|  < \eps + 2\eps \sum_{k=1}^n |\lambda_k|  \leq 3\eps.
$$
So it follows for $y^* \in \Gamma \setminus V_1$ that
\begin{align*}
\|S^*(y^*)\| & \leq \tilde{\eps} |y^*(F(e))| + (1-\tilde{\eps}) \|T^*(y^*)\| + (1-\tilde{\eps}) \|(F^*(y^*))(e)x_r^* - T^*F^*(y^*)\| \\
&\leq \tilde{\eps} r_{e^*} + (1-\tilde{\eps}) +2\eps(1-\tilde{\eps}) + 3\eps(1-\tilde{\eps}) \\
&= \tilde{\eps} r_{e^*} + (1-\tilde{\eps}) (1+5\eps) \\
&\leq 1
\end{align*}
from the choice of $\tilde{\eps}$ since $y_2^* \in \Gamma_{e^*}$ and by (iv)'. These results give that $\|S\| \leq 1$. On the other hand, notice from (ii)' and (iii)' that
$$
1 = |x_r^*(x_r)| = |y_2^*(S(x_r))| \leq \|S(x_r)\| \leq \|S\| \leq 1,
$$
hence $S$ attains its norm at $x_r$. Finally, we have
\begin{align*}
\|S-T\| &= \sup_{y^* \in \Gamma} \|S^*(y^*) - T^*(y^*)\| \\
&\leq \sup_{y^* \in \Gamma} \|y^*(F(e))x_r^* - T^*F^*(y^*)\| + 2\tilde{\eps} \leq 5\eps + 2\tilde{\eps}
\end{align*}
arguing similarly as above. This concludes the proof.
\end{proof}

Observe that the operator $S$ in the proof of Theorem \ref{thm:denseness} is $\Gamma_0$-flat for every $\Gamma_0 \subseteq \Gamma$. As mentioned in Introduction, each Asplund operator $T \in \Lin (X,Y)$ is $\Gamma$-flat for every subset $\Gamma \subseteq B_{Y^*}$. Thus, one of the direct consequences of Theorem \ref{thm:denseness} is that a Banach space $Y$ with quasi-$\ACK$ structure satisfies property \textup{B}$^k$, i.e., $\NA(X, Y) \cap \mathcal{K} (X, Y) $ is dense in $\mathcal{K} (X,Y)$ for every Banach space $X$.

\begin{cor}\label{cor:denseness}
Let $X$ and $Y$ be Banach spaces. Let $\mathcal{I}$ be either an ideal of compact, weakly compact, or Asplund operators from $X$ into $Y$. 
\begin{enumerate}
\setlength\itemsep{0.3em}
\item[\textup{(a)}]
If $Y$ has quasi-$\ACK$ structure, then every operator $T \in \mathcal{I}$ can be approximated by norm attaining operators which are chosen from $\mathcal{I}$ as well.
\item[\textup{(b)}] \textup{(}Acosta, Aguirre and Pay\'a \cite{AAP2}\textup{)}
If $Y$ has property quasi-$\beta$, then $\NA(X,Y)$ is dense in $\Lin(X,Y)$.
\end{enumerate}
\end{cor}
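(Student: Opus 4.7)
The plan is to reduce both statements to Theorem~\ref{thm:denseness} once the missing flatness input and the ideal-stability of the approximating operator have been verified.

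For part (a), I would first recall that each of the three classes under consideration (compact, weakly compact, Asplund operators) consists of Asplund operators, and, as noted in the Introduction, every Asplund operator $T \in \Lin(X,Y)$ is $\Gamma'$-flat for every subset $\Gamma' \subseteq B_{Y^*}$. In particular $T$ is $\Gamma_0$-flat for every $\Gamma_0 \subseteq \Gamma$, so the hypothesis of Theorem~\ref{thm:denseness} is satisfied and $T$ admits a norm-attaining approximant $S$. To conclude, it remains to check that the specific operator $S$ constructed in the proof of Theorem~\ref{thm:denseness}, namely
$$
S(x) = x_r^*(x) F(e) + (1-\tilde{\eps})(I_Y - F) T(x),
$$
belongs to $\mathcal{I}$: the first summand is a rank-one operator, and the second is a left composition of $T \in \mathcal{I}$ with the bounded operator $(1-\tilde{\eps})(I_Y - F) \in \Lin(Y)$. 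Both ingredients lie in $\mathcal{I}$ by the standard stability of compact, weakly compact, and Asplund operators under left composition with bounded operators and under addition of finite-rank perturbations.

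For part (b), I would invoke Remark~\ref{rem:basic}(b) to equip $Y$ with quasi-$\ACK$ structure via the $1$-norming set $\Gamma := A$ coming from the quasi-$\beta$ data. The key point is that on this particular $\Gamma$ the flatness hypothesis of Theorem~\ref{thm:denseness} becomes automatic for \emph{every} $T \in \Lin(X,Y)$. Indeed, the defining inequality $|y_\mu^*(y_\lambda)| \leq \rho(y_\lambda^*) < 1$ for $\mu \neq \lambda$ shows that within $A$ each $y_\lambda^*$ is $w^*$-isolated by the neighborhood $\{y^* : |y^*(y_\lambda) - 1| < 1 - \rho(y_\lambda^*)\}$; consequently any $\Gamma_0 \subseteq A$ inherits the discrete $w^*$-topology, and every nonempty relatively $w^*$-open subset of $\Gamma_0$ contains a singleton on which $T^*$ has zero diameter. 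Applying Theorem~\ref{thm:denseness} to arbitrary $T \in \Lin(X,Y)$ then yields the density of $\NA(X,Y)$ in $\Lin(X,Y)$.

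The main obstacle I anticipate is essentially bookkeeping: in (a) one must be explicit that the three ideals are simultaneously closed under left composition with arbitrary bounded operators and under addition of rank-one operators so that the concrete $S$ of Theorem~\ref{thm:denseness} does not leak out of $\mathcal{I}$, while in (b) the short discreteness observation must be highlighted, because it is precisely what allows Theorem~\ref{thm:denseness} to be applied to arbitrary bounded operators rather than only to those lying in some flat ideal.
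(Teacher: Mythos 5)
Your proposal is correct and follows essentially the same route as the paper: part (a) is reduced to Theorem~\ref{thm:denseness} by noting that compact and weakly compact operators are Asplund (hence $\Gamma_0$-flat for every $\Gamma_0\subseteq\Gamma$) and that the explicit approximant $S$ stays in $\mathcal{I}$, while part (b) uses Remark~\ref{rem:basic}(b) together with the $w^*$-discreteness of the quasi-$\beta$ set $A$ to make every bounded operator $\Gamma_0$-flat. The only difference is that you prove the discreteness of $(A,w^*)$ directly from the inequality $|y_\mu^*(y_\lambda)|\leq\rho(y_\lambda^*)<1$, whereas the paper delegates this to a citation; your verification is valid.
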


\begin{proof}
(a). As mentioned above, the operator $S$ in the proof of Theorem \ref{thm:denseness} is in the ideal $\mathcal{I}$ whenever $T$ belongs to $\mathcal{I}$.

(b). 
Note from Remark \ref{rem:basic} that $Y$ has quasi-$\ACK$ structure. Since $(\Gamma, w^*)$ is a discrete topological space, every operator $T \in \Lin(X,Y)$ is $\Gamma_0$-flat for any $\Gamma_0 \subseteq \Gamma$ \cite[Example C]{CGKS2018}. 
\end{proof}

\section{Function spaces with quasi-ACK structure}

\subsection{Results on quasi-ACK structure}
Given a compact Hausdorff space $K$ and a (unital) uniform algebra $A$ on $K$, the Choquet boundary and Shilov boundary for $A$ will be denoted by $\chi A$ and $\partial A$, respectively. We refer the reader to \cite{Dales, Stout} for background on uniform algebras and related notions.

\begin{theorem}\label{thm:unifalgebra}
Let $A$ be a uniform algebra on a compact Hausdorff space $K$ and $Z$ be a Banach space with quasi-$\ACK$ structure. 
Suppose that a Banach space $X \subseteq C(\partial A, Z)$ satisfies the following properties: 
\begin{enumerate}
\setlength\itemsep{0.3em}
\item[\textup{(i)}] For every $x \in X$ and $f \in A$, the function $t \mapsto f(t) x$ belongs to $X$.
\item[\textup{(ii)}] $X$ contains all functions of the form $f \otimes z$ for $f \in A$ and $z \in Z$. 
\item[\textup{(iii)}] $T\circ x \in X$ for every $x \in X$ and $T \in \mathcal{L} (Z)$. 
\end{enumerate} 
Then $X$ has quasi-$\ACK$ structure. 
\end{theorem}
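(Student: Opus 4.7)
The plan is to lift the quasi-$\ACK$ structure from $Z$ to $X$ by pairing the witnesses on $Z$ with narrow-sector peaking functions on $A$ at Choquet boundary points. Let $\Gamma_Z \subseteq B_{Z^*}$, $\rho_Z : \Gamma_Z \to [0,1)$ and $\{\Gamma_{Z, z_0^*}\}_{z_0^* \in \Ext(B_{Z^*})}$ witness the quasi-$\ACK$ structure of $Z$. I would take
\[
 \Gamma := \{\delta_t \otimes z^* : t \in \chi A,\ z^* \in \Gamma_Z\}, \qquad \rho(\delta_t \otimes z^*) := \rho_Z(z^*),
\]
which is $1$-norming for $X$ by $\partial A = \overline{\chi A}$ and the availability of a scalar peaking function at each $t \in \chi A$ together with (i)--(ii). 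For each $e^* \in \Ext(B_{X^*})$, using Milman's theorem ($e^* \in \overline{\mathbb{T}\Gamma}^{w^*}$) and passing to the ``$Z$-coordinate'' of an approximating net $\lambda_\alpha \delta_{t_\alpha} \otimes z_\alpha^* \to e^*$, I would extract an extreme point $z_0^* \in \Ext(B_{Z^*})$ aligned with $e^*$ and set $\Gamma_{e^*} := \{\delta_t \otimes z^* : t \in \chi A,\ z^* \in \Gamma_{Z, z_0^*}\}$; conditions (i)--(ii) of Definition~\ref{def:qACK} are then inherited from the $Z$-level.

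For condition (iii), given $\eps > 0$ and a nonempty relatively $w^*$-open $U \subseteq \Gamma_{e^*}$, I would fix $\delta_{t_1} \otimes z_1^* \in U$, restrict to the $Z$-slice $U_Z := \{z^* \in \Gamma_{Z, z_0^*} : \delta_{t_1} \otimes z^* \in U\}$, and apply the quasi-$\ACK$ structure of $Z$ to $(U_Z, \eps/2)$ to obtain $V_Z \subseteq U_Z$, $w_1^* \in V_Z$, $e_Z \in S_Z$, $F_Z \in \Lin(Z)$ satisfying (i)$'$--(vi)$'$ for $Z$. Writing $r := \rho_Z(w_1^*) < 1$, the crucial uniform-algebra input is to produce a neighborhood $W$ of $t_1$ and a function $f \in A$ with $f(t_1) = \|f\|_\infty = 1$, $|f(t)| \leq \min(r, \eps/2)$ for $t \in \partial A \setminus W$, and $f(W)$ confined to a narrow sector of the closed unit disk around $1$ (specifically $|f(t) - 1| \leq \eps/2$ and $|f(t)| + (1-\eps)|1 - f(t)| \leq 1$ on $W$). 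Then set
\[
 e := f \otimes e_Z,\quad x_1^* := \delta_{t_1} \otimes w_1^*,\quad F(x)(t) := f(t) F_Z(x(t)),\quad V := \{\delta_t \otimes v^* : t \in W \cap \chi A,\ v^* \in V_Z\}.
\]
Conditions (i)--(iii) on $X$ guarantee $e \in X$ and $F \in \Lin(X)$, and the $w^*$-closeness of $U$ to $\delta_{t_1} \otimes z_1^*$ permits taking $W$ small enough that $V \subseteq U$.

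Verifying (i)$'$, (ii)$'$, (iii)$'$, (v)$'$, (vi)$'$ is a direct pointwise computation based on $F(e)(t) = f(t)^2 F_Z(e_Z)$ and $F^*(\delta_t \otimes z^*) = f(t)\,\delta_t \otimes F_Z^*(z^*)$, together with the corresponding $Z$-items; notice that the narrow-sector bound on $f(W)$ is what makes (vi)$'$ close, via $|f(t) v_Z^*(e_Z) - 1| \leq |f(t)||v_Z^*(e_Z) - 1| + |f(t) - 1| \leq \eps/2 + \eps/2 = \eps$. The main obstacle is (iv)$'$. For $x^* = \delta_t \otimes z^* \in \Gamma \setminus V_1$, I would split into $t \in \chi A \setminus W$ (where $|x^*(F(e))| = |f(t)|^2 |z^*(F_Z(e_Z))| \leq r$ is immediate) and $t \in W \cap \chi A$; on the latter I would argue contrapositively that if $z^* \in V_{1, Z}$ (the $Z$-analogue of $V_1$), then the triangle bound $\|z^* - f(t) F_Z^*(z^*)\| \leq \|z^* - F_Z^*(z^*)\| + |1 - f(t)|\|F_Z^*(z^*)\|$ combined with the narrow-sector inequality $|f(t)| + (1 - \eps)|1 - f(t)| \leq 1$ upgrades the $V_{1, Z}$-inequality for $z^*$ into the $V_1$-inequality for $x^*$, so $x^* \in V_1$. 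Contrapositively, $x^* \notin V_1$ forces $z^* \notin V_{1, Z}$, and (iv)$'$ for $Z$ then yields $|z^*(F_Z(e_Z))| \leq r$, so $|x^*(F(e))| \leq r = \rho(x_1^*)$. The genuinely delicate point of the whole argument is thus the construction of the narrow-sector peaking function $f$, which is the same classical uniform-algebra ingredient that powers the $\ACK$-type results for scalar uniform algebras in \cite{CGKS2018}.
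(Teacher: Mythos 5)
Your overall architecture coincides with the paper's: the same $\Gamma$, $\rho$ and $\Gamma_{e^*}$ built from functionals $\delta_t\otimes z^*$, a reduction to a product-type neighborhood inside $U$, the quasi-$\ACK$ data of $Z$ applied to the $Z$-part, a peaking function on the algebra for the $A$-part, and tensorized witnesses $e$, $F$, $V$. The problem is that the uniform-algebra ingredient you single out as ``the genuinely delicate point'' is, as you state it, unattainable: you ask for one function $f\in A$ and one neighborhood $W$ with $f(t_1)=\|f\|=1$, $|f|\le\min(r,\eps/2)$ on $\partial A\setminus W$ \emph{and} $|f-1|\le\eps/2$ on $W$. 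This forces $f(\partial A)$ into the disjoint union of the disks $\{|w|\le\eps/2\}$ and $\{|w-1|\le\eps/2\}$; for the disk algebra (where $f(\partial A)$ is connected, contains $1$, and must reach modulus $\le\eps/2$ whenever $W$ is forced to be a proper subset by a small $U$) no such continuous $f$ exists. Your conditions (v)' and (vi)' each genuinely need one half of this contradictory pair: (vi)' needs the scalar factor in $e$ to be near $1$ on the index set of $V$, while (v)' needs the scalar factor in $F$ to be near $0$ off that index set. The paper, via \cite[Lemma 4.4]{CGKS2018} (compare Lemma \ref{lem:Urysohn}), decouples these with \emph{two} functions and a further shrinking: starting from $e_A$ with $e_A(\Omega)\subseteq St_\eps$, small off $B_1$ and peaking at $s_0$, one sets $B_2:=\{t\in B_1:|1-e_A(t)|<\eps\}$ and $f_0:=e_A^{n_0}$; the Stolz inequality gives $|e_A|\le 1-(1-\eps)\eps<1$ off $B_2$, so $f_0$ is uniformly small off $B_2$ while $f_0(s_0)=e_A(s_0)=1$. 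Then $e:=e_A\otimes e_Z$, $F(x)(t):=f_0(t)F_Z(x(t))$, and $V$ is indexed over $B_2$, not over your $W$. Note that powering alone does not save the one-function version, since $f_0=e_A^{n_0}$ is no longer close to $1$ on $B_2$; the two distinct functions are essential. Your verifications of (i)'--(vi)' survive this replacement essentially verbatim, but the decoupling is the missing idea.

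Two smaller points. Your extraction of $z_0^*\in\Ext(B_{Z^*})$ from a net $\lambda_\alpha\delta_{t_\alpha}\otimes z_\alpha^*\to e^*$ is not a proof: the $z_\alpha^*$ lie in $\Gamma_Z$, not in $\Ext(B_{Z^*})$, and a $w^*$-limit of them need not be extreme. What is needed, and what the paper cites from \cite[Corollary 3.4]{BD}, is the characterization of $\Ext\bigl(B_{C(\partial A,Z)^*}\bigr)$ as $\{\delta_t\otimes z^*: t\in\partial A,\ z^*\in\Ext(B_{Z^*})\}$, from which the representation of extreme points of $B_{X^*}$ for the subspace $X$ follows. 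Separately, your ``slice at $t_1$, then fatten'' route to $V\subseteq U$ lacks a quantitative margin: $V_Z\subseteq U_Z$ only yields the strict inequalities defining $U$ at $t=t_1$, with no uniform gap over $v^*\in V_Z$ to absorb the perturbation $\|f_k(t)-f_k(t_1)\|$. This is repairable (shrink the slice before invoking the structure of $Z$, then use norm-continuity of the $f_k$), and the paper sidesteps it by using joint continuity of $(t,z^*)\mapsto(z^*(f_k(t)))_{k=1}^n$ to place a product neighborhood $B_1\times D_1$ inside $U$ from the outset.
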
 

\begin{proof}
Let $\Gamma_Z$ be a $1$-norming set and $\rho_Z : \Gamma_Z \rightarrow [0,1)$ be a function which witness quasi-$\ACK$ structure of $Z$. Define the set $\Gamma := \{ \delta_t \otimes z^* : t \in \partial A, z^* \in \Gamma_Z \}$ and the function $\rho : \Gamma \rightarrow [0,1)$ by $\rho (\delta_t \otimes z^*):= \rho_Z (z^*)$ for every $t \in \partial A$ and $z^* \in \Gamma_Z$. It is clear that $\Gamma \subseteq B_{X^*}$ is a $1$-norming set for $X$. 

Let $e^* \in \Ext (B_{X^*} )$ be given. By \cite[Corollary 3.4]{BD}, we have that $e^* = \delta_{t_0} \otimes z_0^*$ for some $t_0 \in \partial A$ and $z_0^* \in \Ext (B_{Z^*})$. Let $\Gamma_{z_0^*} \subseteq \Gamma_Z$ be a set satisfying the properties (i)-(iii) in Definition \ref{def:qACK}. Define the set $\Gamma_{e^*} := \{ \delta_t \otimes z^* : t \in \chi A, z^* \in \Gamma_{z_0^*} \}$. It is clear that $\Gamma_{e^*} \subseteq \Gamma$. Moreover, $e^* = \delta_{t_0} \otimes z_0^*$ belongs to $\overline{\mathbb{T} \Gamma_{e^*} }^{w^*}$. Indeed, if we take a net $(t_\alpha) \subseteq \chi A$ and a net $(\lambda_\alpha z_\alpha^*) \subseteq \mathbb{T} \Gamma_{z_0^*}$ so that $(t_\alpha)$ converges to $t_0$ (since $\chi A$ is dense in $\partial A$ \cite[Corollary 4.3.7]{Dales}) and $(\lambda_\alpha z_\alpha^*)$ converges weak-star to $z_0^*$, respectively, then $(\lambda_\alpha \delta_{t_\alpha} \otimes z_\alpha^*) $ converges weak-star to $\delta_{t_0} \otimes z_0^* = e^*$. It is clear by definition that $\sup_{x^* \in \Gamma_{e^*}} \rho (x^*) \leq \sup_{z^* \in \Gamma_{z_0^*}} \rho_Z (z^*) < 1$.

It remains to check that $\Gamma_{e^*}$ satisfies the property (iii). From this moment, the proof will follow the similar lines as the proof of \cite[Theorem 4.16]{CGKS2018}. 
Let $\eps >0$ and $U$ be a nonempty relatively $w^*$-open subset of $\Gamma_{e^*}$. Let $t_1 \in \chi A$ and $z_1^* \in \Gamma_{z_0^*}$ be such that $\delta_{t_1} \otimes z_1^* \in U$. Take $f_1, \ldots, f_n \in X$ such that $\delta_t \otimes z^* \in U$ whenever $\max_{1\leq k \leq n} | \langle \delta_t \otimes z^* - \delta_{t_1} \otimes z_1^*, f_k \rangle | < 1$. Consider the following sets
\begin{align*}
&B := \{ t \in \chi A : |z_1^* (f_k (t)) - z_1^* (f_k (t_1)) | < 1, \, k=1,\ldots, n\}, \\
&D := \{ z^* \in \Gamma_{z_0^*} : |z^* (f_k (t_1)) - z_1^* (f_k (t_1)) | < 1, \, k=1,\ldots, n\}.
\end{align*} 
Observe that $B$ is relatively open in $\chi A$ and $D$ is relatively $w^*$-open in $\Gamma_{z_0^*}$. Since the mapping 
\[
(t,z^*) \in \chi A \times (\Gamma_{z_0^*}, w^*) \longmapsto (z^* (f_1 (t)), \ldots, z^* (f_n (t)) ) \in \mathbb{K}^n 
\]
is continuous, there exist a nonempty open set $B_1 \subseteq B$ and a nonempty $w^*$-open set $D_1 \subseteq D$ such that for every $t \in B_1$ and $z^* \in D_1$, we have that $\max_{1\leq k \leq n} |z^* (f_k (t)) - z_1^* (f_k (t_1)) | < 1$. Define $W:= \{ \delta_t \otimes z^* : t \in B_1, z^* \in D_1\}$. It is clear that $W$ is contained in $U$. 

Now, applying the definition of quasi-$\ACK$ structure to $Z, \Gamma_Z, D_1$ and $\eps/2$, we get $V_Z \subseteq D_1, z_2^* \in V_Z, e_Z \in S_Z$ and $F_Z \in \mathcal{L}(Z)$ which satisfy the properties (i)'-(vi)'. Moreover, applying \cite[Lemma 4.4]{CGKS2018} to $A, \chi A, B_1$ and $\eps /2$, we may find a nonempty subset $B_2 \subseteq B_1$, functions $f_0, e_A \in A$ and $s_0 \in B_2$ satisfying its conclusion. 

Finally, define the nonempty subset $V \subseteq U$ and corresponding $x_1^* \in V, e\in S_X$ and $F \in \mathcal{L}(X)$ as follows: 
\begin{align*}
&V:= \{ \delta_t \otimes z^* : t \in B_2, z^* \in V_Z \} \subseteq W \subseteq U, \\
&x_1^* := \delta_{s_0} \otimes z_1^*, \quad e(t) := e_A (t) e_Z \,\, \text{ for every } t \in \partial A
\end{align*} 
and 
\[
(F(f))(t):= f_0 (t) F_Z (f(t)) \text{ for every } f \in X \text{ and } t \in \partial A.
\]
Arguing in the same way as in the proof of \cite[Theorem 4.16]{CGKS2018}, it can be verified that the set $V \subseteq U $ and elements $x_1^* \in V, e\in S_X, F \in \mathcal{L}(X)$ satisfy the conditions (i)'-(vi)'.
\end{proof} 

\begin{remark}
We do not know if the argument used in Theorem \ref{thm:unifalgebra} can be applied to the case when the space $C(\partial A, Z)$ is replaced by, for instance, $C(\partial A, (Z, w))$. The difficulties come from the lack of concrete representation of the extremal structure of the unit ball of $C(\partial A, (Z, w))^*$. As a matter of fact, its extreme points behave quite differently from that of the unit ball of $C(\partial A, Z)^*$ (see, for instance, \cite{HS} and the references therein). 
\end{remark}

\begin{cor}\label{cor:unifalgebra} Let $Z$ be a Banach space with quasi-$\ACK$ structure.
\begin{enumerate}
\setlength\itemsep{0.3em}
\item[\textup{(a)}] If $K$ is a compact Hausdorff space, then $C(K,Z)$ has quasi-$\ACK$ structure. 
\item[\textup{(b)}] If $\Omega$ is a completely regular Hausdorff space and if, in addition, $Z$ is finite dimensional, then $C_b (\Omega, Z)$ has quasi-$\ACK$ structure. 
\item[\textup{(c)}] $c_0 (Z)$ has quasi-$\ACK$ structure. 
\end{enumerate}
\end{cor}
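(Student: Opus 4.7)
For part (a), the plan is to apply Theorem \ref{thm:unifalgebra} directly with the unital uniform algebra $A := C(K)$, whose Shilov boundary is $K$ itself. Setting $X := C(K,Z) = C(\partial A, Z)$, the three structural hypotheses of Theorem \ref{thm:unifalgebra} are immediate: pointwise multiplication of a scalar-valued continuous function by a $Z$-valued continuous function is continuous; each $f \otimes z$ is continuous; and continuity is preserved by postcomposition with bounded operators on $Z$.

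For (b), the strategy is to identify $C_b(\Omega, Z)$ isometrically with $C(\beta \Omega, Z)$ and invoke (a). Choosing a basis $z_1,\ldots,z_n$ of $Z$ and writing $f \in C_b(\Omega, Z)$ as $f = \sum_i f_i z_i$, the boundedness of $f$ together with equivalence of norms on the finite-dimensional $Z$ yields $f_i \in C_b(\Omega)$, which extends to $\tilde f_i \in C(\beta \Omega)$ by the Stone--\v{C}ech property; then $\tilde f := \sum_i \tilde f_i z_i$ lies in $C(\beta \Omega, Z)$. Since $s \mapsto \|\tilde f(s)\|_Z$ is continuous on $\beta \Omega$ and agrees with $t \mapsto \|f(t)\|_Z$ on the dense subset $\Omega$, the identification is isometric, and (b) reduces to (a).

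Part (c) is the most delicate, because Theorem \ref{thm:unifalgebra} does not apply directly: any unital uniform algebra on $\mathbb{N} \cup \{\infty\}$ contains the constant function $\mathbf{1}$, and $\mathbf{1} \otimes z$ is not in $c_0(Z)$, so hypothesis (ii) fails. My plan is instead to mimic the proof of Theorem \ref{thm:unifalgebra}, adapted to the discrete structure of $\mathbb{N}$. Let $\Gamma_Z$ and $\rho_Z$ witness quasi-$\ACK$ of $Z$, denote by $e_n^*$ the evaluation at coordinate $n$, and put $\Gamma := \{e_n^* \otimes z^* : n \in \mathbb{N},\ z^* \in \Gamma_Z\} \subseteq B_{c_0(Z)^*}$ and $\rho(e_n^* \otimes z^*) := \rho_Z(z^*)$; this $\Gamma$ is $1$-norming. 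Using the standard description $\Ext(B_{\ell_1(Z^*)}) = \{e_n^* \otimes z^* : n \in \mathbb{N},\ z^* \in \Ext(B_{Z^*})\}$, each $e^* \in \Ext(B_{c_0(Z)^*})$ has the form $e^* = e_{n_0}^* \otimes z_0^*$, and one can choose $\Gamma_{e^*} := \{e_{n_0}^* \otimes z^* : z^* \in \Gamma_{z_0^*}\}$; conditions (i)--(ii) of Definition \ref{def:qACK} are then straightforward, using that $z^* \mapsto e_{n_0}^* \otimes z^*$ is a $w^*$-homeomorphism.

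For condition (iii), given $\eps > 0$ and relatively $w^*$-open $U \subseteq \Gamma_{e^*}$, the set $U$ corresponds to a $w^*$-open $D \subseteq \Gamma_{z_0^*}$; apply quasi-$\ACK$ of $Z$ to $D$ and $\eps$ to obtain $V_Z, z_2^*, e_Z, F_Z$, and then define $V := \{e_{n_0}^* \otimes z^* : z^* \in V_Z\}$, $x_1^* := e_{n_0}^* \otimes z_2^*$, $e := e_{n_0} \otimes e_Z$, and $F \in \mathcal{L}(c_0(Z))$ by $(Ff)(n_0) := F_Z(f(n_0))$ and $(Ff)(n) := 0$ for $n \neq n_0$. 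The key computation, $F^*(e_n^* \otimes z^*) = 0$ for $n \neq n_0$ and $F^*(e_{n_0}^* \otimes z^*) = e_{n_0}^* \otimes F_Z^*(z^*)$, shows that coordinates $n \neq n_0$ fall automatically into $V_1$, while on the $n_0$-slice the conditions (i)'--(vi)' for $(F, e, x_1^*, V)$ reduce precisely to those for $(F_Z, e_Z, z_2^*, V_Z)$ in $Z$. The principal obstacle is thus locating the correct replacement for the uniform-algebra construction in the absence of a suitable unital algebra, together with the careful bookkeeping of extreme points of $B_{\ell_1(Z^*)}$; once these are in place, the verification is routine.
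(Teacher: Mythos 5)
Your treatment of (a) and (b) coincides with the paper's: (a) is a direct application of Theorem \ref{thm:unifalgebra} with $A=C(K)$, and (b) rests on the isometric identification $C_b(\Omega,Z)\cong C(\beta\Omega,Z)$, for which you supply the (correct) coordinatewise extension argument that the paper leaves implicit; finite-dimensionality of $Z$ is indeed what makes the extension possible. For (c) you genuinely diverge, and for good reason. The paper disposes of (c) in one line by saying that $c_0(Z)$ is a closed subspace of $C(\beta\mathbb{N},Z)$, implicitly invoking Theorem \ref{thm:unifalgebra}; but, as you observe, hypothesis (ii) of that theorem fails for this subspace, since $\mathbf{1}\otimes z\notin c_0(Z)$ (quasi-$\ACK$ structure does not pass to closed subspaces in general, so the embedding alone proves nothing). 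Your substitute --- building $\Gamma=\{e_n^*\otimes z^*\}$, reducing each extreme point $e_{n_0}^*\otimes z_0^*$ of $B_{\ell_1(Z^*)}$ to the single coordinate $n_0$, and lifting $(V_Z,z_2^*,e_Z,F_Z)$ via $F=J_{n_0}F_ZP_{n_0}$ so that the off-slice functionals land automatically in $V_1$ --- is correct, and it is in fact exactly the argument the paper itself gives later as Theorem \ref{thm:c0sum} for general $c_0$-sums $(\oplus_{i}X_i)_{c_0}$. So your route for (c) buys a self-contained and fully rigorous verification (and, pushed slightly further, the more general $c_0$-sum result), at the cost of not exploiting the uniform-algebra machinery; one could alternatively try to salvage the paper's route by rerunning the \emph{proof} of Theorem \ref{thm:unifalgebra} rather than its statement, noting that all extreme points of $B_{c_0(Z)^*}$ live over points of $\mathbb{N}$, which are peak points, but that is no shorter than what you did.
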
 

\begin{proof}
(a) is an obvious consequence of Theorem \ref{thm:unifalgebra}. For the item (b), note that $C_b (\Omega, Z)$ can be isometrically identified with $C(\beta \Omega, Z)$, where $\beta \Omega$ is the Stone-\v{C}ech compactification of $\Omega$. (c) follows from the observation that $c_0 (Z)$ can be viewed as a closed subspace of $C(\beta \mathbb{N}, Z)$. 
\end{proof}

\begin{remark}
We do not know if the class of Banach spaces with quasi-$\ACK$ structure is stable under $\ell_\infty$-sum operations in general. As far as we know, it is even unknown if property quasi-$\beta$ is stable under $\ell_\infty$-sums or not. 
Nevertheless, the assertion (b) of Corollary \ref{cor:unifalgebra} yields that $\ell_\infty(Z)$ has quasi-$\ACK$ structure whenever $Z$ is a {finite dimensional} Banach space with quasi-$\ACK$ structure. 
\end{remark} 

In fact, Corollary \ref{cor:unifalgebra} enables us to construct a non-trivial Banach space which is of quasi-$\ACK$ structure, but not $\ACK_\rho$ structure or property quasi-$\beta$. We first need the following technical lemma.

\begin{lemma}\label{lem:propB}
Let $K$ be a compact Hausdorff space and $Z$ be a Banach space. If $C(K,Z)$ has property \textup{B}, then $C(K)$ has property \textup{B}. 
\end{lemma}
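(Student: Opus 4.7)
The plan is to exploit that $C(K)$ sits $1$-complementedly inside $C(K, Z)$ and then choose a projection \emph{adapted} to the norm-attaining approximator. Fix $z_0 \in S_Z$ and $z_0^* \in S_{Z^*}$ with $z_0^*(z_0) = 1$, so that $J \colon g \mapsto g \cdot z_0$ is an isometric embedding $C(K) \hookrightarrow C(K, Z)$. Given a Banach space $X$, an operator $T \in \Lin(X, C(K))$ with $\|T\| = 1$, and $\eps > 0$, I would lift $T$ to $\widetilde{T} := J \circ T$ (still of norm one) and apply property \textup{B} of $C(K, Z)$ with some small $\delta \ll \eps$ to obtain a norm-attaining $\widetilde{S} \in \Lin(X, C(K, Z))$ satisfying $\|\widetilde{S} - \widetilde{T}\| < \delta$ and attaining its norm at some $x_0 \in S_X$, with $\|\widetilde{S}(x_0)(t_0)\|_Z = \|\widetilde{S}\|$ for some $t_0 \in K$.

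The naive projection $z_0^* \circ \widetilde{S}$ is close to $T$ but need not attain its norm, so I would instead build $S \in \Lin(X, C(K))$ using a functional adapted to the attainment point. Put $w_0 := \widetilde{S}(x_0)(t_0)$, choose $w_0^* \in S_{Z^*}$ with $w_0^*(w_0) = \|w_0\|$, and, to correct for phase, set $\lambda := T(x_0)(t_0)/|T(x_0)(t_0)|$; this is well defined because $|T(x_0)(t_0)| \geq \|\widetilde{S}\| - \delta > 1 - 2\delta > 0$ for $\delta$ small. Then define
\[
S(x)(t) := \lambda \, w_0^*(\widetilde{S}(x)(t)), \qquad x \in X,\ t \in K.
\]
Norm-attainment of $S$ at $x_0$ is immediate: $|S(x_0)(t_0)| = |w_0^*(w_0)| = \|w_0\| = \|\widetilde{S}\| \geq \|S\|$.

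The heart of the argument---and the main obstacle---is the estimate $\|S - T\| = O(\delta)$. From $\|\widetilde{S}(x_0)(t_0) - T(x_0)(t_0) z_0\|_Z < \delta$ together with $|T(x_0)(t_0)|$ and $\|\widetilde{S}\|$ both lying within $O(\delta)$ of $1$, one obtains that the unit vector $\hat{w}_0 := w_0/\|w_0\|_Z$ is $O(\delta)$-close to $\lambda z_0$ in $Z$; applying $w_0^*$ then yields $|\lambda w_0^*(z_0) - 1| = O(\delta)$, and the direct expansion $S(x)(t) - T(x)(t) = T(x)(t)(\lambda w_0^*(z_0) - 1) + \lambda w_0^*((\widetilde{S}-\widetilde{T})(x)(t))$ gives the desired bound. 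The subtlety is precisely the \emph{phase correction} $\lambda$: in the complex case the ``random'' norming functional $w_0^*$ behaves approximately like $\bar{\lambda} z_0^*$ on $z_0$ rather than like $z_0^*$ itself, so without absorbing $\lambda$ into the definition of $S$ one would lose the approximation bound. Extracting the correct $\lambda$ from the attainment modulus $|T(x_0)(t_0)| \approx 1$ is what makes the construction work.
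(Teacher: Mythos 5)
Your proposal is correct and follows essentially the same route as the paper: embed $C(K)$ into $C(K,Z)$ via $g\mapsto g\cdot z_0$, apply property \textup{B} of $C(K,Z)$, and compose the resulting norm-attaining operator with a phase-corrected norming functional of $\widetilde{S}(x_0)(t_0)$. The paper packages the phase correction by choosing the functional $z_1^*$ with $z_1^*(\widetilde{S}(x_0)(t_0))=|\theta|^{-1}\theta$ (i.e.\ $z_1^*=\lambda w_0^*$ in your notation) and rescaling by $|\theta|$ at the end, but the construction and estimates are the same.
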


\begin{proof}
Let $X$ be a Banach space and $T \in \Lin (X, C(K))$ with $\|T\|=1$. Fix $z_0 \in S_Z$ and define $\widetilde{T} \in \Lin (X, C(K,Z))$ by 
\[
\widetilde{T} (x)(t) := [(T(x))(t)] z_0 \quad (x \in X, \, t \in K). 
\]
By the assumption, given $\eps \in (0,1)$, there is an operator $\widetilde{S} \in \NA (X, C(K,Z))$ so that $\|\widetilde{S} -\widetilde{T} \| < \eps$ and $\|\widetilde{S}(x_0)\| = \|S\|=1$. Take $t_0 \in K$ and $z_1^* \in S_{Z^*}$ so that $\| (\widetilde{S}(x_0))(t_0) \| = \|\widetilde{S}( x_0)\|$ and $z_1^* ( ( \widetilde{S}(x_0) )(t_0) ) = |(T(x_0))(t_0)|^{-1} (T(x_0))(t_0)$ (noting that $(T(x_0))(t_0) \neq 0$). For simplicity, put $\theta := (T(x_0))(t_0)$ and observe that 
\begin{equation}\label{eq:theta}
| | \theta | z_1^* (z_0) -1 | = |z_1^* ((\widetilde{T} (x_0)) (t_0)) - z_1^* ((\widetilde{S}( x_0))(t_0)) | < \eps.
\end{equation}

Now, define ${S} \in \Lin (X, C(K))$ by 
\[
{S} (x)(t) := z_1^* ((\widetilde{S}(x))(t)) \quad (x\in X, \, t \in K). 
\]
Note that $\|{S}\| = \|{S} (x_0)\| = 1$. For $x \in B_X$, we have 
\begin{align*}
\| |\theta| {S} x - |\theta| z_1^* (z_0) T(x) \| = \sup_{t \in K} \bigl| |\theta| z_1^* ((\widetilde{S}(x))(t)) - |\theta| z_1^* (z_0) (T(x)) (t) \bigr| \leq \| \widetilde{S} - \widetilde{T} \| <\eps, 
\end{align*} 
which implies that $\| |\theta| {S} - |\theta| z_1^* (z_0) T \| \leq \eps$.
Note from \eqref{eq:theta} that $\| T - |\theta| z_1^* (z_0) T \| = |1 - |\theta| z_1^* (z_0) | \|T\|< \eps$, and $|\theta| {S} \in \NA(X, C(K))$; hence we complete the proof.
\end{proof}

\begin{prop}\label{prop:qACKbut}
There exists a Banach space $E$ satisfying that 
\begin{enumerate}
\setlength\itemsep{0.3em}
\item[\textup{(i)}] $E$ has quasi-$\ACK$ structure,
\item[\textup{(ii)}] $E$ fails to have property \textup{B}, 
\item[\textup{(iii)}] $(\ell_1^2, E)$ fails to have the \textup{BPBp} for operators.
\end{enumerate} 
Consequently, $E$ fails to have $\ACK_\rho$ structure for any $\rho \in [0,1)$ and property quasi-$\beta$. 
\end{prop}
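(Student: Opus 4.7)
The plan is to take $E := C([0,1], X_0)$, where $X_0$ is the Banach space from \cite[Example 4.1]{ACKLM} that was already invoked in the proof of Remark \ref{rem:basic}(c): recall that $X_0$ has property quasi-$\beta$ and that $(\ell_1^2, X_0)$ fails the \textup{BPBp} for operators. Property (i) then essentially comes for free. Since property quasi-$\beta$ gives $X_0$ quasi-$\ACK$ structure by Remark \ref{rem:basic}(b), Corollary \ref{cor:unifalgebra}(a) upgrades this to quasi-$\ACK$ structure of $E = C([0,1], X_0)$. Property (ii) is a contrapositive application of Lemma \ref{lem:propB}: if $E$ had property \textup{B}, then so would $C[0,1]$, contradicting the classical result of Schachermayer \cite{S} quoted in the introduction.

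The substantive step is (iii), and the plan is to argue by contradiction via a lift-and-evaluate trick. The ingredients are the isometric embedding $\Phi : X_0 \to E$ sending $x$ to the constant function $t \mapsto x$, and the evaluation operators $\operatorname{ev}_t : E \to X_0$, $f \mapsto f(t)$, which have norm one and satisfy $\operatorname{ev}_t \circ \Phi = \Id_{X_0}$ for every $t \in [0,1]$. Assume for contradiction that $(\ell_1^2, E)$ has the \textup{BPBp} with modulus $\eta$, and let $\eps_0 > 0$ witness the failure of \textup{BPBp} for $(\ell_1^2, X_0)$. By that failure, pick $T_0 \in \Lin(\ell_1^2, X_0)$ with $\|T_0\| = 1$ and $x_0 \in S_{\ell_1^2}$ such that $\|T_0(x_0)\| > 1 - \eta(\eps_0)$ but no norm attaining operator in $\Lin(\ell_1^2, X_0)$ is simultaneously $\eps_0$-close to $T_0$ in norm and $\eps_0$-close to $x_0$ at the attainment point. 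Lifting to $T := \Phi \circ T_0$ preserves the norm and the perturbation datum, so \textup{BPBp} for $(\ell_1^2, E)$ supplies $S \in \Lin(\ell_1^2, E)$ and $u_0 \in S_{\ell_1^2}$ with $\|S\| = \|S(u_0)\|_E = 1$, $\|u_0 - x_0\| < \eps_0$, and $\|S - T\| < \eps_0$. Compactness of $[0,1]$ yields $t^* \in [0,1]$ attaining the sup-norm $\|S(u_0)\|_E = 1$, and I set $S_0 := \operatorname{ev}_{t^*} \circ S \in \Lin(\ell_1^2, X_0)$. Then $\|S_0(u_0)\|_{X_0} = 1 = \|S_0\|$ and, using $\operatorname{ev}_{t^*} \circ \Phi = \Id_{X_0}$, $\|S_0 - T_0\| \leq \|S - T\| < \eps_0$, contradicting the choice of $T_0$.

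The closing "consequently" is bookkeeping: property quasi-$\beta$ would imply property \textup{B} by Corollary \ref{cor:denseness}(b), contradicting (ii); and $\ACK_\rho$ structure would force the \textup{BPBp} for $(\ell_1^2, E)$ via \cite[Theorem 3.4]{CGKS2018} (every operator from $\ell_1^2$ is finite rank, hence $\Gamma$-flat for any $\Gamma$), contradicting (iii). The only genuinely delicate point in this plan is the evaluation trick in (iii): it works because the sup-norm on $C([0,1], X_0)$ is actually attained at some point $t^*$, which lets one restrict the ambient BPBp data back to the fibre $X_0$ cleanly, with no loss of norm, attainment, or proximity. Everything else is assembled directly from results already established in the paper.
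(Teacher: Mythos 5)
Your construction is the same as the paper's: $E=C(K,Z)$ with $Z$ the space from \cite[Example 4.1]{ACKLM} and $K$ chosen so that $C(K)$ fails property \textup{B} (your concrete choice $K=[0,1]$ is exactly the instance the introduction cites from \cite{S}), with (i) from Corollary \ref{cor:unifalgebra}(a), (ii) from Lemma \ref{lem:propB}, and the ``consequently'' part handled identically. The one genuine divergence is in (iii): the paper simply invokes \cite[Proposition 2.8]{ACKLM} to transfer the failure of the \textup{BPBp} from $(\ell_1^2,Z)$ to $(\ell_1^2,C(K,Z))$, whereas you reprove that transfer from scratch via the constant-function embedding $\Phi$ and the evaluation $\operatorname{ev}_{t^*}$ at a point where the sup-norm of $S(u_0)$ is attained. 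Your argument is correct: $\Phi$ is an isometry so the lift preserves $\|T\|$ and $\|T(x_0)\|$, compactness of $K$ guarantees the attaining point $t^*$ exists, and $S_0-T_0=\operatorname{ev}_{t^*}\circ(S-T)$ gives the norm estimate, so the contradiction with the choice of $(T_0,x_0)$ goes through. What your route buys is self-containedness (one fewer external citation, at the cost of half a page); what the paper's route buys is brevity and slightly more generality in the statement it leans on. Either is acceptable.
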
 

\begin{proof}
Let $K$ be a compact Hausdorff space so that $C(K)$ fails property \textup{B} (see \cite{JW2} or \cite{S}). As in Remark \ref{rem:basic}, let $Z$ be a Banach space satisfying property quasi-$\beta$ such that $(\ell_1^2, Z)$ fails to have the \textup{BPBp} for operators. It follows from \cite[Proposition 2.8]{ACKLM} that $(\ell_1^2, C(K,Z))$ cannot have the \textup{BPBp} for operators. Consequently, $E:= C(K,Z)$ fails to have $\ACK_\rho$ structure for any $\rho \in [0,1)$ by \cite[Theorem 3.4]{CGKS2018}, and $E$ fails to have property \textup{B} by Lemma \ref{lem:propB}. As the quasi-$\ACK$ structure of $E$ follows from Corollary \ref{cor:unifalgebra}.(a), we conclude that $E$ is the desired space. 
\end{proof}

By Corollary \ref{cor:denseness}, a Banach space satisfying the above (i)-(iii) in Proposition \ref{prop:qACKbut} has property \textup{B}$^k$. As far as we are aware, it was unknown whether such a Banach space $E$ constructed in the proof of Proposition \ref{prop:qACKbut} has property \textup{B}$^k$ or not. 

\subsection{Results on ACK structure}

In this section, we show that $\ACK$ structure of a Banach space $Z$ implies $\ACK$ structure of some spaces of $Z$-valued functions. To this end, we make use of the following lemma which can be viewed as a \emph{non-compact} version of \cite[Lemma 4.4]{CGKS2018}. In other words, we are going to consider a Urysohn-type lemma in the context of an algebra of functions on a (not necessarily compact) Hausdorff space. As an application, we observe $\ACK$ structure of certain spaces of bounded holomorphic functions between Banach spaces. 

Let us denote by $\rho A$ the set of all strong peak points of a subalgebra $A \subseteq C_b (\Omega)$ and by $St_\eps$ the \emph{Stolz's region} given by $St_\eps = \{z \in \mathbb{C}: |z| + (1-\eps)|1-z| \leq 1 \}$. 


\begin{lemma}\label{lem:Urysohn}
Let $\Omega$ be a Hausdorff space and $A$ be a subalgebra of $C_b (\Omega)$. Then, for every open subset $W$ of $\Omega$, $t_0 \in W \cap \rho A$ and $0 < \eps <1$, there exist a nonempty subset $W_0 \subseteq W$ and functions $f, e \in A$ such that $f(t_0) = \|f\| =1, e(t_0)=\|e\|=1$, $\sup_{t \in \Omega \setminus W_0} |f(t)|\leq \eps$, $|1-e(t)|<\eps$ for every $t \in W_0$ and $f(\Omega) \subseteq St_\eps$. 
\end{lemma}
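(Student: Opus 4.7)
My plan is to mirror the compact-case construction of \cite[Lemma 4.4]{CGKS2018} and let the strong peaking assumption at $t_0$ play the role that compactness plays there. Since $t_0 \in \rho A$, I would fix a strong peak function $h \in A$ for $t_0$, so that $h(t_0) = \|h\| = 1$ and $\sup_{t \in \Omega \setminus U}|h(t)| < 1$ for every open neighborhood $U$ of $t_0$. The output will be $e := h$ and $f := P(h)$, where $P \in \mathbb{C}[z]$ is a polynomial in one complex variable to be chosen; note that $f \in A$ because $A$ is a subalgebra of $C_b(\Omega)$ and is thus closed under polynomial operations.

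Next I would produce the neighborhood $W_0$ directly from $h$. Set
\[
W_0 := \{t \in W : |1 - h(t)| < \eps\}.
\]
By continuity of $h$ and $h(t_0) = 1$, this is a nonempty open subset of $W$ containing $t_0$, and the requirement $|1 - e(t)| < \eps$ on $W_0$ is then automatic from $e = h$. Since $W_0$ is itself a neighborhood of $t_0$, the strong peaking of $h$ supplies $\delta \in (0,1)$ with $|h(t)| \leq 1 - \delta$ for every $t \in \Omega \setminus W_0$; this is the only point where \emph{strong} (rather than mere) peaking is used, and it is exactly what absorbs the non-compactness of $\Omega$. Since $h(\Omega) \subseteq \overline{D}$ (the closed unit disk), the remaining assertions on $f = P(h)$ now reduce to a single one-variable problem: find a polynomial $P$ with $P(1) = 1$, $P(\overline{D}) \subseteq St_\eps$ (which, once combined with $P(1)=1$, automatically forces $\|P\|_{\overline{D}} = 1$), and $\sup_{|z| \leq 1-\delta}|P(z)| \leq \eps$.

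The main obstacle is the construction of this polynomial $P$. The naive candidates such as $z^n$ or $((1+z)/2)^n$ satisfy the concentration bound on $\{|z|\leq 1-\delta\}$ for large $n$ but fail the Stolz-region inclusion $P(\overline{D}) \subseteq St_\eps$ (as is easily checked at points on the unit circle other than $1$), so a genuinely complex-analytic construction is required. However, this construction is precisely the one performed in \cite[Lemma 4.4]{CGKS2018}, and it takes place entirely inside $\overline{D}$ and $St_\eps \subseteq \mathbb{C}$, with no dependence on the ambient topological space. I would therefore invoke or reproduce that one-variable polynomial construction verbatim; once $P$ is in hand, the six conclusions of the lemma are immediate by direct substitution, using $h(t_0) = 1$, $h(\Omega) \subseteq \overline{D}$, and $|h(t)| \leq 1 - \delta$ on $\Omega \setminus W_0$.
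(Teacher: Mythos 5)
The construction hinges on the existence of a one-variable polynomial $P$ with $P(1)=1$, $P(\overline{D})\subseteq St_\eps$ and $|P|\le\eps$ on $(1-\delta)\overline{D}$ (where $\overline{D}$ is your closed unit disc), and you defer its existence to \cite[Lemma 4.4]{CGKS2018}. This is where the argument breaks: no such polynomial exists, and no such construction is performed in that lemma. Indeed, let $P$ be nonconstant, holomorphic near $z=1$, with $P(1)=1$ and $|P|\le1$ on $\overline{D}$ near $1$, and write $1-P(z)=b_m(1-z)^m+O((1-z)^{m+1})$ with $b_m\ne0$. If $m\ge2$, then $\arg\bigl(b_m(1-z)^m\bigr)$ sweeps an interval of length greater than $\pi$ as $z$ ranges over $\overline{D}$ near $1$ (the directions of $1-z$ fill the right half-plane), so $\operatorname{Re}(1-P(z))<0$ somewhere, whence $|P(z)|^2=1-2\operatorname{Re}(1-P)+|1-P|^2>1$, a contradiction. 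Hence $m=1$, and $|P(e^{i\theta})|\le1$ for both signs of $\theta$ forces $b_1$ to be real and nonzero; but then $1-|P(e^{i\theta})|=O(\theta^2)$ while $|1-P(e^{i\theta})|=|b_1|\,|\theta|+O(\theta^2)$, so $|P(e^{i\theta})|+(1-\eps)|1-P(e^{i\theta})|>1$ for all small $\theta\ne0$. Thus $P(\overline{D})\not\subseteq St_\eps$ for every nonconstant polynomial with $P(1)=1$: the Stolz inclusion is a nontangential condition at $1$ and is incompatible with analyticity of the composing function at $z=1$. The same local obstruction kills the weaker requirement $P(\{z\in\overline{D}:|1-z|<\eps\})\subseteq St_\eps$, so the route of producing $f$ as $P(h)$ from a raw strong peak function $h$ cannot be repaired by a cleverer polynomial. (A minor further point: a subalgebra of $C_b(\Omega)$ need not be unital, so only $P$ with $P(0)=0$ would keep $P(h)$ in $A$ anyway.)

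The paper's proof is structured precisely so as to avoid this obstruction. It first invokes the Urysohn-type lemma of Kim and Lee \cite[Lemma 3]{KL} --- the non-compact analogue of \cite[Lemma 2.5]{CGK}, carried out at the level of the algebra $A$ and genuinely singular at $z=1$ when viewed as a ``function of'' a peak function --- to obtain $e\in A$ already satisfying $e(t_0)=\|e\|=1$, $\sup_{t\in\Omega\setminus W}|e(t)|\le\eps$ \emph{and} $e(\Omega)\subseteq St_\eps$. Only then does it set $W_0:=\{t\in W:|1-e(t)|<\eps\}$ and $f:=e^{n_0}$, using that $St_\eps$ is stable under $z\mapsto z^n$ (from $(1-\eps)|1-z|\le1-|z|$ one gets $(1-\eps)|1-z^n|\le(1-|z|)\sum_{k=0}^{n-1}|z|^k=1-|z|^n$) and that $|e|\le1-(1-\eps)\eps$ on $W\setminus W_0$ while $|e|\le\eps$ off $W$. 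Your reduction to ``strong peak function plus polynomial'' is therefore an essentially weaker starting point from which the conclusion $f(\Omega)\subseteq St_\eps$ cannot be recovered; the missing ingredient is exactly the cited Kim--Lee lemma, not a one-variable polynomial fact.
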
 

\begin{proof}
Apply \cite[Lemma 3]{KL} to get a function $e \in A$ such that $e(t_0)=\|e\| = 1$, $\sup_{t \in \Omega \setminus W} |e(t)|\leq \eps$ and $e(\Omega) \subseteq St_\eps$. Define $W_0 := \{ t \in W : |1-e(t)| < \eps\}$ and the function $f_n : \Omega \rightarrow \mathbb{K}$ by $f_n (t) = (e(t))^n$. Arguing as in the proof of \cite[Lemma 4.4]{CGKS2018}, we may find a suitable $n_0 \in \mathbb{N}$ so that $f:=f_{n_0}$ satisfies the conclusion of the lemma.
\end{proof} 

Recall that a linear topology $\tau$ on $Z$ is \emph{$\Gamma$-acceptable} for a $1$-norming set $\Gamma \subseteq B_{Z^*}$ if it is dominated by the norm topology and dominates $\sigma(Z,\Gamma)$. A function $f: \mathcal{T} \to \mathcal{S}$ between topological spaces $\mathcal{T}$ and $\mathcal{S}$ is \emph{quasi-continuous} if for every nonempty open set $U \subset \mathcal{T}$, $z \in U$ and any neighborhood $V \subseteq \mathcal{S}$ of $f(z)$, there exists a nonempty open subset $W \subseteq U$ such that $f(W) \subseteq V$. The next theorem is a general result on a vector-valued function spaces which can be seen as a non-compact version of \cite[Theorem 4.16]{CGKS2018}.

\begin{theorem}\label{thm:functionalgebra}
Let $\Omega$ be a Hausdorff space, $A$ be a subalgebra of $C_b(\Omega)$ and $Z$ be a Banach space with $\ACK_\rho$ structure \textup{(}resp., $\ACK$ structure\textup{)} witnessed by a $1$-norming set $\Gamma_Z$. Finally, let $\tau$ be a $\Gamma_Z$-acceptable topology on $Z$. Suppose that $\rho A$ is a $1$-norming set for $A$ and $X \subseteq C_b (\rho A, (Z,\tau))$ satisfies the following properties: 
\begin{enumerate}
\setlength\itemsep{0.3em}
\item[\textup{(i)}] For every $x \in X$ and $f \in A$, the function $t \mapsto f(t) x$ belongs to $X$.
\item[\textup{(ii)}] $X$ contains all functions of the form $f \otimes z$ for $f \in A$ and $z \in Z$. 
\item[\textup{(iii)}] $T\circ x \in X$ for every $x \in X$ and $T \in \mathcal{L} (Z)$.
\item[\textup{(iv)}] For every finite collection $\{ x_k\}_{k=1}^n \subseteq X$, the corresponding two-variable function $\phi : \rho A \times (\Gamma_Z, w^*)\rightarrow \mathbb{K}^n$, defined by $\phi(t, z^*)= (z^* (x_k (t)))_{k=1}^n$ is quasi-continuous. 
\end{enumerate} 
Then $X$ has $\ACK_\rho$ structure \textup{(}resp., $\ACK$ structure\textup{)}. 
\end{theorem}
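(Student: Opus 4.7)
The plan is to follow the blueprint of \cite[Theorem 4.16]{CGKS2018}, with two non-trivial substitutions: the quasi-continuity hypothesis (iv) will replace the joint continuity of the analogous coordinate map (automatic in the compact case), and Lemma \ref{lem:Urysohn} will replace the compact Urysohn-type \cite[Lemma 4.4]{CGKS2018}. I will take as candidate $1$-norming set for $X$ the collection
\[
\Gamma := \{\delta_t \otimes z^* : t \in \rho A,\, z^* \in \Gamma_Z\} \subseteq B_{X^*},
\]
and verify its $1$-normingness from the fact that $\rho A$ is $1$-norming for $A$ (so that peaking functions combined with property (ii) yield $\|\delta_t \otimes y^*\|_{X^*} = \|y^*\|$ for every $t \in \rho A$) together with the $1$-normingness of $\Gamma_Z$. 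The $\Gamma_Z$-acceptability of $\tau$ is used to ensure that $z^* \circ x \in C_b(\rho A)$ for every $z^* \in \Gamma_Z$ and $x \in X$, so that each $\delta_t \otimes z^*$ is well-defined on $X$.

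Fix $\eps > 0$ and a relatively $w^*$-open $U \subseteq \Gamma$, and choose $\delta_{t_1} \otimes z_1^* \in U$ with a basic neighborhood determined by finitely many $x_1,\dots,x_n \in X$. The key move is to invoke the quasi-continuity of $\phi(t, z^*) := (z^*(x_k(t)))_{k=1}^n$ at the point $(t_1, z_1^*)$: this provides a nonempty open product $B_1' \times D_1 \subseteq \Omega \times (\Gamma_Z, w^*)$ (refining to a basic open if needed) such that $\phi$ maps $(B_1' \cap \rho A) \times D_1$ into a sufficiently small neighborhood of $\phi(t_1, z_1^*)$ to force $\{\delta_t \otimes z^* : t \in B_1' \cap \rho A,\, z^* \in D_1\} \subseteq U$. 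Pick any $t_0 \in B_1' \cap \rho A$ and set $\eps_0 := \eps/2$.

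I then apply the $\ACK_\rho$ structure of $Z$ to $D_1$ with tolerance $\eps_0$ to obtain $V_Z \subseteq D_1$, $z_2^* \in V_Z$, $e_Z \in S_Z$, $F_Z \in \Lin(Z)$ satisfying (i)--(vi), and apply Lemma \ref{lem:Urysohn} to $\Omega, A, B_1', t_0, \eps_0$ to produce a nonempty open $B_0 \subseteq B_1'$ and $f_0, e_A \in A$ with $f_0(t_0)=e_A(t_0)=1$, $|f_0(t)|\leq \eps_0$ off $B_0$, $|1-e_A(t)|<\eps_0$ on $B_0$, and $f_0(\Omega) \subseteq St_{\eps_0}$. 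Set $B_2 := B_0 \cap \rho A$ (nonempty, since $t_0 \in B_2$) and define
\[
V := \{\delta_t \otimes z^* : t \in B_2,\, z^* \in V_Z\} \subseteq U,\quad x_1^* := \delta_{t_0} \otimes z_2^*,\quad e := e_A \otimes e_Z,
\]
together with $F \in \Lin(X)$ given by $(F(x))(t) := f_0(t)\,F_Z(x(t))$, which is well-defined into $X$ by properties (i) and (iii). A direct computation yields the identity $F^*(\delta_t \otimes z^*) = f_0(t)\,(\delta_t \otimes F_Z^*(z^*))$, which will be the workhorse of the verification.

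Conditions (i)--(iii) of the $\ACK_\rho$ definition follow from this identity, from $f_0(t_0)=e_A(t_0)=1$, and from properties (i)--(iii) of $F_Z, e_Z, z_2^*$. Condition (v) splits into $t \notin B_0$ (bound $\|F^*(x^*)\| \leq \eps_0 < \eps$ directly) and $t \in B_2$ (lift the $\operatorname{aco}\{0, V_Z\}$-approximation from $Z$ by scaling the coefficients by $f_0(t)$); condition (vi) follows from $|e_A(t) z^*(e_Z) - 1| \leq |e_A(t)-1| + |z^*(e_Z)-1| \leq 2\eps_0 = \eps$. The main obstacle is (iv). For this, I will establish the contrapositive implication $z^* \in V_1^Z \Rightarrow \delta_t \otimes z^* \in V_1^X$ through the chain
\begin{align*}
|f_0(t)|\|F_Z^*(z^*)\| &+ (1-\eps)\|z^* - f_0(t)F_Z^*(z^*)\|\\
&\leq \|F_Z^*(z^*)\|\bigl(|f_0(t)| + (1-\eps)|1-f_0(t)|\bigr) + (1-\eps)\|z^* - F_Z^*(z^*)\|\\
&\leq \|F_Z^*(z^*)\| + (1-\eps_0)\|z^* - F_Z^*(z^*)\| \leq 1,
\end{align*}
where the second step uses the Stolz inclusion $f_0(\Omega) \subseteq St_{\eps_0}$ together with $\eps \geq \eps_0$, and the last step uses $z^* \in V_1^Z$. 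Combining with the $\ACK_\rho$ bound $|z^*(F_Z(e_Z))| \leq \rho$ for $z^* \notin V_1^Z$ then gives $|x^*(F(e))| = |f_0(t)\,e_A(t)\,z^*(F_Z(e_Z))| \leq \rho$ for every $x^* \in \Gamma \setminus V_1^X$. The parenthetical $\ACK$ case follows from exactly the same construction: when $Z$ satisfies the uniform bound $|z^*(F_Z(e_Z))| \leq \rho$ on $\Gamma_Z$, this transfers to every $x^* \in \Gamma$ for free since $|f_0(t)|, |e_A(t)| \leq 1$.
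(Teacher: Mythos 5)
Your proposal is correct and follows essentially the same route as the paper's proof: the same $1$-norming set $\Gamma = \{\delta_t \otimes z^* : t \in \rho A,\ z^* \in \Gamma_Z\}$, hypothesis (iv) used exactly where joint continuity is used in the compact case, Lemma \ref{lem:Urysohn} in place of \cite[Lemma 4.4]{CGKS2018}, and the same definitions of $V$, $x_1^*$, $e$ and $F$. The only difference is that you carry out the verification of (i)'--(vi)' explicitly (the identity $F^*(\delta_t\otimes z^*)=f_0(t)\,\delta_t\otimes F_Z^*(z^*)$ and the Stolz-region estimate for (iv)'), whereas the paper delegates these computations to \cite[Theorem 4.16]{CGKS2018}.
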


\begin{proof}
We here give a proof of when $Z$ has $\ACK_\rho$ structure for some $\rho \in [0,1)$. Let $\Gamma := \{ \delta_t \otimes z^* : t \in \rho A, z^* \in \Gamma_Z \} \subseteq B_{Z^*}$. Then $\Gamma$ is $1$-norming for $X$. Fix $\eps >0$ and a nonempty relatively $w^*$-open subset $U$ of $\Gamma$. Let $t_0 \in \rho A$ and $z_0^* \in \Gamma_Z$ be such that $\delta_{t_0} \otimes z_0^* \in U$. Take $f_1, \ldots, f_n \in X$ as in the proof of Theorem \ref{thm:unifalgebra}, and consider the following sets
\begin{align*}
&B := \{ t \in \rho A : |z_0^* (f_k (t)) - z_0^* (f_k (t_0)) | < 1, \, k=1,\ldots, n\}, \\
&D := \{ z^* \in \Gamma_{Z} : |z^* (f_k (t_0)) - z_0^* (f_k (t_0)) | < 1, \, k=1,\ldots, n\}.
\end{align*} 
Observe that $B$ is relatively open in $\rho A$ (since $\tau$ is $\Gamma_Z$-acceptable) and $D$ is relatively $w^*$-open in $\Gamma_{Z}$. By the assumption (iv), there exist a nonempty open set $B_1 \subseteq B$ and a nonempty $w^*$-open set $D_1 \subseteq D$ such that for every $t \in B_1$ and $z^* \in D_1$, we have that $\max_{1\leq k \leq n} |z^* (f_k (t)) - z_0^* (f_k (t_0)) | < 1$. Define $W:= \{ \delta_t \otimes z^* : t \in B_1, z^* \in D_1\}$. 

Applying the definition of $\ACK_\rho$ structure to $Z, \Gamma_Z, D_1$ and $\eps/2$, we get $V_Z \subseteq D_1, z_2^* \in V_Z, e_Z \in S_Z$ and $F_Z \in \mathcal{L}(Z)$ which satisfy the properties (i)'-(vi)'. Moreover, applying Lemma \ref{lem:Urysohn} to $A, B_1$ and $\eps /2$, we may find a nonempty subset $B_2 \subseteq B_1$ and functions $f_0, e_A \in A$ satisfying its conclusion. 

Define the nonempty subset $V \subseteq U$ and corresponding $x_1^* \in V, e\in S_X, F \in \mathcal{L}(X)$ as in the proof of Theorem \ref{thm:unifalgebra}. For the same reason as before, we may conclude that these $V, x_1^*, e$ and $F$ satisfy the conditions (i)'-(vi)'. 
\end{proof}

Given complex Banach spaces $X$ and $Y$, let $\mathcal{H}^\infty (B_X^\circ, Y)$ be the Banach space of all bounded holomorphic functions from the open unit ball $B_X^\circ$ of $X$ into $Y$. We denote by $\mathcal{A}_u (B_X, Y)$ (resp., $\mathcal{A}_\infty (B_X, Y)$) the subspace of $\mathcal{H}^\infty (B_X^\circ, Y)$ of all members which are uniformly continuous on $B_X^\circ$ (resp., continuously extendable to $B_X$). 

Let $\mathcal{A} (B_X, Y)$ be either $\mathcal{A}_u (B_X, Y)$ or $\mathcal{A}_\infty (B_X, Y)$. It is well known that if $X$ is locally uniformly rotund (for short, \textup{LUR}), then every point in $S_X$ is a strong peak point for $\mathcal{A} (B_X, \mathbb{C})$. Moreover, it is observed in \cite{CLS} that if $X$ has the Radon-Nikod\'ym property (for short, \textup{RNP}), then $\rho \mathcal{A}(B_X, \mathbb{C})$ is a $1$-norming set for $\mathcal{A}(B_X, \mathbb{C})$.

\begin{cor}\label{cor:holo}
Let $X$ and $Y$ be complex Banach spaces. Suppose that $Y$ has $\ACK_\rho$ structure \textup{(}resp., $\ACK$ structure\textup{)}. If $X$ is \textup{LUR} or has the \textup{RNP}, then $\mathcal{A} (B_X, Y)$ has $\ACK_\rho$ structure \textup{(}resp., $\ACK$ structure\textup{)}. 
\end{cor}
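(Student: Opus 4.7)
The plan is to derive Corollary~\ref{cor:holo} by invoking Theorem~\ref{thm:functionalgebra} with the following dictionary: the ambient Hausdorff space $\Omega$ is $B_X$ with its norm topology; the subalgebra $A \subseteq C_b(\Omega)$ is the scalar-valued analogue $\mathcal{A}(B_X, \mathbb{C})$; the role of the inner Banach space $Z$ of the theorem is played by $Y$; the $\Gamma_Z$-acceptable topology $\tau$ is the norm topology on $Y$; and the function space whose $\ACK_\rho$ (resp.\ $\ACK$) structure we wish to establish is $\mathcal{A}(B_X, Y)$ itself, naturally sitting inside $C_b(\rho A, (Y, \|\cdot\|))$ upon restriction.

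First I would verify the framework hypotheses. That $\mathcal{A}(B_X, \mathbb{C})$ is a subalgebra of $C_b(B_X)$ is immediate, and the norm topology on $Y$ clearly dominates $\sigma(Y, \Gamma_Y)$ for any $1$-norming $\Gamma_Y \subseteq B_{Y^*}$, hence is $\Gamma_Y$-acceptable. The crucial fact that $\rho A$ is a $1$-norming set for $A$ follows from the hypothesis on $X$: in the \textup{LUR} case, the preceding discussion gives $S_X \subseteq \rho A$, and $S_X$ is $1$-norming for $\mathcal{A}(B_X, \mathbb{C})$ by the maximum modulus principle; in the \textup{RNP} case, this is exactly the result of \cite{CLS} recalled above. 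Finally, elements of $\mathcal{A}(B_X, Y)$ are norm-bounded and norm-continuous on $B_X$, so their restrictions to $\rho A$ do belong to $C_b(\rho A, (Y, \|\cdot\|))$.

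Next I would verify conditions (i)--(iii) of Theorem~\ref{thm:functionalgebra} via routine closure properties of holomorphic functions: the pointwise product of $f \in \mathcal{A}(B_X, \mathbb{C})$ with any $x \in \mathcal{A}(B_X, Y)$ is again in $\mathcal{A}(B_X, Y)$; elementary tensors $f \otimes y : t \mapsto f(t) y$ lie in $\mathcal{A}(B_X, Y)$; and post-composition $T \circ x$ with $T \in \mathcal{L}(Y)$ preserves both holomorphy and the boundary regularity ($\mathcal{A}_u$ or $\mathcal{A}_\infty$). The only condition requiring slightly more care is (iv), the quasi-continuity on $\rho A \times (\Gamma_Y, w^*)$ of $\phi(t, z^*) = (z^*(x_k(t)))_{k=1}^n$ for a finite family $\{x_k\}_{k=1}^n \subseteq \mathcal{A}(B_X, Y)$. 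I would prove the stronger statement that $\phi$ is jointly continuous: given $(t_0, z_0^*)$, norm-continuity of each $x_k$ controls $\|x_k(t) - x_k(t_0)\|$ on a norm-neighborhood of $t_0$, $w^*$-continuity of evaluation controls $|z^*(x_k(t_0)) - z_0^*(x_k(t_0))|$ on a basic $w^*$-neighborhood of $z_0^*$, and the triangle inequality delivers a uniform estimate on $|z^*(x_k(t)) - z_0^*(x_k(t_0))|$ across the resulting product neighborhood. Joint continuity trivially implies quasi-continuity.

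I do not anticipate a genuine obstacle, since Theorem~\ref{thm:functionalgebra} has been tailored to handle exactly this sort of vector-valued function space; the role of the proof is essentially to identify the correct ambient algebra and topology. The most delicate ingredient is the $1$-norming property of $\rho A$ in the \textup{RNP} case, which we quote rather than prove.
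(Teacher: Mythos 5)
Your proposal is correct and follows essentially the same route as the paper: both apply Theorem~\ref{thm:functionalgebra} to the restrictions of $\mathcal{A}(B_X,Y)$ to $\rho A$ with $\Omega = B_X$, $A=\mathcal{A}(B_X,\mathbb{C})$ and $\tau$ the norm topology, the paper merely leaving the verification of conditions (i)--(iv) implicit. The only point you leave tacit is that the restriction map $f\mapsto f\vert_{\rho A}$ is an isometry onto the subspace to which the theorem is applied, which follows at once from the $1$-norming property of $\rho A$ that you already established, applied to $y^*\circ f\in\mathcal{A}(B_X,\mathbb{C})$ for $y^*\in B_{Y^*}$.
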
 

\begin{proof}
From the comment above, if we let $A := \mathcal{A} (B_X, \mathbb{C})$, then $\rho A$ is a norming set for $A \subseteq C_b (B_X)$ in any case. Notice that the restriction mapping from $\mathcal{A} (B_X, Y)$ to $\{ f \vert_{\rho A} : f \in \mathcal{A} (B_X, Y)\} \subseteq C_b (\rho A, Y)$ is an isometry. Applying Theorem \ref{thm:functionalgebra} to $\{ f \vert_{\rho A} : f \in \mathcal{A} (B_X, Y)\}$, we complete the proof.
\end{proof}

We finish this subsection by showing that the assumption on the domain $X$ in Corollary \ref{cor:holo} can be removed when the target space $Y$ is \emph{finite dimensional}. In order to do so, we borrow the notion of holomorphic functions whose range is relatively compact from \cite{Mujica}. Let us denote by $\mathcal{H}_K^\infty (B_X^\circ, Y)$ the subspace of all functions $f \in \mathcal{H}^\infty (B_X^\circ, Y)$ which have a relatively compact range. Let $\mathcal{A}_K (B_X, Y):= \mathcal{A} (B_X, Y) \cap \mathcal{H}_K^\infty (B_X^\circ, Y)$, where $\mathcal{A} (B_X, Y)$ stands for either $\mathcal{A}_u (B_X, Y)$ or $\mathcal{A}_\infty (B_X, Y)$. 

\begin{prop}\label{prop:cpt_holomorphic}
Let $X$ and $Y$ be complex Banach spaces. Suppose that $Y$ has $\ACK_\rho$ structure \textup{(}resp., $\ACK$ structure\textup{)} and $(Y,w)$ is Lindel\"of. Then $\mathcal{A}_K (B_X, Y)$ has $\ACK_\rho$ structure \textup{(}resp., $\ACK$ structure\textup{)}.
\end{prop}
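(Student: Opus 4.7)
The plan is to adapt the proof of Corollary \ref{cor:holo}, using the relative compactness of the range of functions in $\mathcal{A}_K(B_X,Y)$ together with the Lindel\"of property of $(Y,w)$ to replace the LUR/RNP hypothesis on $X$. Concretely, I would apply a variant of Theorem \ref{thm:functionalgebra} to $A := \mathcal{A}(B_X, \mathbb{C})$, but with a modified $1$-norming set built from \emph{all} of $B_X$ rather than from the strong peak points $\rho A$ alone.

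The natural candidate is $\Gamma := \{ \delta_x \otimes z^* : x \in B_X,\, z^* \in \Gamma_Y\} \subseteq B_{\mathcal{A}_K(B_X, Y)^*}$, which is $1$-norming since every member of $\mathcal{A}_K(B_X, Y)$ extends continuously to $B_X$. Given $\eps > 0$ and a nonempty relatively $w^*$-open subset $U \subseteq \Gamma$ determined by a base point $\delta_{x_0} \otimes z_0^* \in U$ and finitely many test functions $f_1, \ldots, f_n \in \mathcal{A}_K(B_X, Y)$, the union $K := \bigcup_{k=1}^n \overline{f_k(B_X^\circ)}$ is a norm-compact subset of $Y$. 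Using that $(Y,w)$ is Lindel\"of, $K$ admits a countable weakly dense subset, and one can extract a countable subfamily of $\Gamma_Y$ that norms $K$. Combining this with the analytic structure of the $f_k$'s, the goal is to produce, via a countable peaking iteration inside $\mathcal{A}(B_X, \mathbb{C})$, an approximate peak function $f_0$ of norm $1$ with $f_0(B_X^\circ \setminus W_0) \subseteq St_\eps$ for some small open neighborhood $W_0$ of a suitable point $s_0 \in B_X$, even when $s_0$ itself is not a strong peak point for the scalar algebra.

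With such an $f_0$ in hand, the remaining ACK data can be assembled as in Theorem \ref{thm:functionalgebra}: take $V_Z$, $z_2^*$, $e_Z$, $F_Z$ from the $\ACK_\rho$ (resp.\ $\ACK$) structure of $Y$ applied to the countable norming data in $\Gamma_Y$, and set
\[
V := \{ \delta_x \otimes z^* : x \in W_0,\, z^* \in V_Z \}, \qquad e(t) := f_0(t) e_Z, \qquad (F(g))(t) := f_0(t) F_Z(g(t)),
\]
and then verify conditions (i)'--(vi)' by a case analysis on the size of $\|F^*(\delta_x \otimes z^*)\|$, exactly as in the proofs of Theorem \ref{thm:functionalgebra} and Corollary \ref{cor:holo}. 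The main obstacle I anticipate is precisely the construction of the approximate peak function $f_0 \in \mathcal{A}(B_X, \mathbb{C})$ when the given base point is not a strong peak point: Lemma \ref{lem:Urysohn} furnishes such a function only at points of $\rho A$, so the substitute requires carefully exploiting both the compactness of $K$ (so that finitely many $z^*$-coordinates of each $f_k$ already determine the relevant local behavior up to $\eps$) and the Lindel\"of property of $(Y,w)$ (to make the necessary countable iteration feasible). Once this peaking step is in place, the remaining verifications are essentially routine adaptations of the arguments already present in the excerpt.
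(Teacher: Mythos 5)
Your plan founders at exactly the step you flag as the ``main obstacle'': producing the approximate peak function $f_0 \in \mathcal{A}(B_X,\mathbb{C})$ at a base point of $B_X$ that need not be a strong peak point. Lemma \ref{lem:Urysohn} (via \cite[Lemma 3]{KL}) only applies at points of $\rho A$, and without LUR or RNP on $X$ there is no reason for $\rho\,\mathcal{A}(B_X,\mathbb{C})$ to be norming or for any peak-type function to exist near a prescribed point. The tools you propose to bridge this --- norm-compactness of $K=\bigcup_k \overline{f_k(B_X^\circ)}$ and the Lindel\"of property of $(Y,w)$ --- are properties of the \emph{target} space $Y$ and of the vector-valued ranges; they carry no information about the scalar algebra $\mathcal{A}(B_X,\mathbb{C})$ on the domain, which is where the peaking problem lives. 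The ``countable peaking iteration'' is therefore not justified, and this is precisely where the argument would fail.

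The paper avoids the issue by changing the underlying space rather than the peaking lemma: each $f\in\mathcal{A}_K(B_X,Y)$ is replaced by its Gelfand transform $\widehat{f}$ on the spectrum $\mathcal{M}$ of $\mathcal{A}(B_X,\mathbb{C})$. Compactness of $\overline{f(B_X^\circ)}$ is used to show $\widehat{f}(\phi)\in Y$ (not merely $Y^{**}$), so that $f\mapsto\widehat{f}$ is an isometry into $C(\mathcal{M},(Y,w))$. One then restricts to the Shilov boundary of the uniform algebra $A:=\{y^*\widehat{f}\}$ on the \emph{compact} space $\mathcal{M}$ and invokes \cite[Theorem 4.16]{CGKS2018} directly, where the Urysohn lemma for uniform algebras on compact spaces is available at every Choquet boundary point --- no strong peak point hypothesis is needed. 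The Lindel\"of property of $(Y,w)$ enters only to guarantee that each $\widehat{f}(\mathcal{M})$ is norm-fragmented (\cite[Corollary E]{CNV}), which is what \cite[Proposition 4.21]{CGKS2018} requires. If you want to salvage your outline, replacing the direct work on $B_X$ by this Gelfand-transform reduction is the missing idea.
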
 

\begin{proof}
Given $f \in \mathcal{A}_K (B_X, Y)$, consider its Gelfand transform $\widehat{f} : \mathcal{M} \rightarrow Y^{**}$ given by $\widehat{f} (\phi) (y^*) := \phi (y^* f)$ for every $\phi \in \mathcal{M}$ and $y^* \in Y^*$, where $\mathcal{M}$ is the spectrum of $\mathcal{A} (B_X, \mathbb{C})$ endowed with the Gelfand topology. Notice that $\widehat{f} \in C(\mathcal{M}, (Y, w))$ and $\|\widehat{f} \| = \|f\|$ for each $f \in \mathcal{A}_K (B_X, Y)$. Indeed, as it is clear that $\widehat{f} \in C(\mathcal{M}, (Y^{**}, w^*))$, it suffices to check that $\widehat{f} (\phi)$ is actually a member of $Y$ for each $\phi \in \mathcal{M}$. Let $(y_\alpha^*) \subseteq B_{Y^*}$ be a net which converges weak-star to some $y_\infty^*$ in $B_{Y^*}$. Using the compactness of $\overline{f (B_X)}$, it follows that the net $(y_\alpha^* f)$ converges to $(y_\infty^* f)$ in $\mathcal{A} (B_X, \mathbb{C})$; hence the net $(\phi (y_\alpha^* f))$ converges to $\phi(y_\infty^* f)$. This shows that $\widehat{f} (\phi)$ is weak-star continuous on $Y^*$ \cite[Corollary 2.7.9]{megginson}; hence $\widehat{f}(\phi) \in Y$. 

Let us say $E = \{ \widehat{f} : f \in \mathcal{A}_K (B_X, Y)\} \subseteq C(\mathcal{M}, (Y,w))$. Define $A:= \{ y^* \widehat{f} : y^* \in Y^*, \widehat{f} \in E \} \subseteq C(\mathcal{M})$. Note that $A$ is a uniform algebra on $\mathcal{M}$. Also, $A$ and $E$ are isometrically isomorphic to $\{ h \vert_{\partial A} : h \in A\} \subseteq C(\partial A)$ and $\{ \widehat{f} \vert_{\partial A} : \widehat{f} \in E \} \subseteq C(\partial A, (Y,w))$, respectively. To complete the proof, it suffices to claim that $E$ satisfies the conditions given in \cite[Theorem 4.16]{CGKS2018}. It is straightforward to check that $(y^* \widehat{f} ) \widehat{g} = \widehat{ (y^* f) g } \in E$ for every $f, g \in \mathcal{A}_K (B_X, Y)$ and $y^* \in Y^*$. Moreover, $(y^* \widehat{f} )\otimes y$ belongs to $E$ for each $y^* \widehat{f} \in A$ and $y \in Y$ since it is the same as $\widehat{g}$, where $g \in \mathcal{A}_K (B_X, Y)$ is given by $g(x) = y^* (f(x)) y$ for every $x \in B_X$. Note also that $T \widehat{f} = \widehat{T f}$ for every $T \in \Lin (Y)$ and $f \in \mathcal{A}_K (B_X, Y)$. Finally, since $(Y,w)$ is Lindel\"of, we obtain that $\widehat{f} ( \mathcal{M})$ is Lindel\"of in $(Y,w)$; hence $\widehat{f} ( \mathcal{M})$ is norm-fragmented \cite[Corollary E]{CNV}. Thus, \cite[Proposition 4.21]{CGKS2018} completes the proof of the claim. 
\end{proof}

The following promised result is an immediate corollary of Proposition \ref{prop:cpt_holomorphic}.

\begin{cor}
Let $X$ be a complex Banach space and $Y$ be a finite dimensional space. If $Y$ has $\ACK_\rho$ structure \textup{(}resp., $\ACK$ structure\textup{)}, then $\mathcal{A} (B_X, Y)$ has $\ACK_\rho$ structure \textup{(}resp., $\ACK$ structure\textup{)}. 
\end{cor}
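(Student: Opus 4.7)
The plan is to reduce the statement directly to Proposition \ref{prop:cpt_holomorphic}. Two observations suffice: (1) when $Y$ is finite dimensional, $\mathcal{A}_K(B_X,Y)$ coincides with $\mathcal{A}(B_X,Y)$; and (2) $(Y,w)$ is Lindel\"of. No additional structural machinery should be needed beyond invoking the already-proved proposition.

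For (1), I would note that any $f \in \mathcal{A}(B_X,Y) \subseteq \mathcal{H}^\infty(B_X^\circ, Y)$ has bounded range, so $\overline{f(B_X^\circ)}$ is a norm-closed bounded subset of the finite dimensional space $Y$. By the Heine--Borel property of finite dimensional normed spaces, such a set is compact, so $f$ has relatively compact range. Hence $\mathcal{A}(B_X,Y) \subseteq \mathcal{H}_K^\infty(B_X^\circ, Y)$, which gives $\mathcal{A}_K(B_X,Y) = \mathcal{A}(B_X,Y)$.

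For (2), on a finite dimensional Banach space $Y$ the weak topology coincides with the norm topology, so $(Y,w) = (Y, \|\cdot\|)$. A finite dimensional normed space is separable and metrizable, hence second countable, and therefore Lindel\"of. With these two facts in hand, Proposition \ref{prop:cpt_holomorphic} applies to the pair $(X,Y)$, and its conclusion states precisely that $\mathcal{A}_K(B_X,Y) = \mathcal{A}(B_X,Y)$ has $\ACK_\rho$ structure (resp., $\ACK$ structure), which is what we want.

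Since both verifications are essentially tautological for finite dimensional $Y$, there is no real obstacle here; the whole point of the corollary is to record the clean statement that follows once Proposition \ref{prop:cpt_holomorphic} has been established. If any care is needed, it is only to confirm that the identification $\mathcal{A}_K(B_X,Y) = \mathcal{A}(B_X,Y)$ respects the isometric structure used in the proposition, which is immediate since the identification is through the identity map.
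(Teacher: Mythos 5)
Your proposal is correct and matches the paper's intent exactly: the paper states this result as an immediate consequence of Proposition \ref{prop:cpt_holomorphic}, and the two observations you supply (relative compactness of bounded sets in finite dimensions giving $\mathcal{A}_K(B_X,Y)=\mathcal{A}(B_X,Y)$, and the weak topology coinciding with the norm topology so that $(Y,w)$ is Lindel\"of) are precisely the details being left to the reader.
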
 

\section{On the stability of quasi-\textup{ACK} structure}

\subsection{Injective tensor products}

Recall that for Banach spaces $X$ and $Y$, the \emph{injective tensor product} of $X$ and $Y$, denoted by $X \iten Y$, is the completion of $X \otimes Y$ under the norm given by 
\[
\|u\| = \sup \left\{ \left| \sum_{i=1}^n x^* (x_i) y^* (y_i) \right| : x^* \in B_{X^*}, y^* \in B_{Y^*} \right\}, 
\]
where $\sum_{i=1}^n x_i \otimes y_i$ is any representation of $u$. We refer the reader to \cite{Ryan} for background on tensor products of Banach spaces. 
We begin this subsection by observing that quasi-$\ACK$ structure is stable under injective tensor products, which is an analogue of \cite[Theorem 4.12]{CGKS2018} for quasi-$\ACK$ structures. 

\begin{theorem}\label{thm:iten}
Let $X$ and $Y$ be Banach spaces with quasi-$\ACK$ structure. Then $X \widehat{\otimes}_\eps Y$ has quasi-$\ACK$ structure. 
\end{theorem}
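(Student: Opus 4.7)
The plan is to adapt Cascales et al.'s proof of their Theorem 4.12 (stability of $\ACK_\rho$ structure under injective tensor products) to the extreme-point stratified setting of quasi-$\ACK$ structure. Let $\Gamma_X \subseteq B_{X^*}$ and $\rho_X : \Gamma_X \to [0,1)$ witness quasi-$\ACK$ structure of $X$, and similarly take $\Gamma_Y, \rho_Y$ for $Y$. The natural candidates for the tensor product are the $1$-norming set
\[
\Gamma := \{ x^* \otimes y^* : x^* \in \Gamma_X, \, y^* \in \Gamma_Y \} \subseteq B_{(X \widehat{\otimes}_\eps Y)^*}
\]
together with the function $\rho : \Gamma \to [0,1)$ defined by $\rho(x^* \otimes y^*) := \max\{\rho_X(x^*), \rho_Y(y^*)\}$.

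To stratify over extreme points, I would invoke the classical factorization asserting that every $e^* \in \Ext(B_{(X \widehat{\otimes}_\eps Y)^*})$ can be written as $e^* = e_X^* \otimes e_Y^*$ with $e_X^* \in \Ext(B_{X^*})$ and $e_Y^* \in \Ext(B_{Y^*})$. Using the sets $\Gamma_{X, e_X^*}$ and $\Gamma_{Y, e_Y^*}$ supplied by the quasi-$\ACK$ structures of $X$ and $Y$, I would define
\[
\Gamma_{e^*} := \{ x^* \otimes y^* : x^* \in \Gamma_{X, e_X^*}, \, y^* \in \Gamma_{Y, e_Y^*} \}.
\]
Conditions (i) and (ii) of Definition \ref{def:qACK} then follow immediately: (i) from the joint $w^*$-continuity of the tensor product on bounded sets, and (ii) from the definition of $\rho$.

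For condition (iii), given $\eps > 0$ and a nonempty relatively $w^*$-open $U \subseteq \Gamma_{e^*}$, I would first shrink $U$ to a rectangular neighborhood $\{ x^* \otimes y^* : x^* \in U_X, \, y^* \in U_Y \}$ with $U_X \subseteq \Gamma_{X, e_X^*}$ and $U_Y \subseteq \Gamma_{Y, e_Y^*}$ relatively $w^*$-open. Applying the quasi-$\ACK$ condition to $X$ and $U_X$ with a suitably scaled parameter $\eps'$ produces $V_X \subseteq U_X$, $x_1^* \in V_X$, $e_X \in S_X$ and $F_X \in \Lin(X)$, and analogously $V_Y, y_1^*, e_Y, F_Y$ for $Y$. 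I would then take the tensor products
\[
V := \{ x^* \otimes y^* : x^* \in V_X, \, y^* \in V_Y \}, \quad z_1^* := x_1^* \otimes y_1^*, \quad e := e_X \otimes e_Y, \quad F := F_X \otimes F_Y,
\]
where $F$ is the natural extension of $F_X \otimes F_Y$ to $X \widehat{\otimes}_\eps Y$. Conditions (i)'--(iii)', (v)' and (vi)' then follow from standard properties of the injective norm, namely the multiplicativity $\|F_X \otimes F_Y\| = \|F_X\|\|F_Y\|$ on rank-one tensors, the inclusion $\operatorname{aco}\{0, V_X\} \otimes \operatorname{aco}\{0, V_Y\} \subseteq \operatorname{aco}\{0, V\}$ controlling the convex approximation in (v)', and the elementary bound $|x^*(e_X) y^*(e_Y) - 1| \leq |x^*(e_X) - 1| + |y^*(e_Y) - 1|$ for (vi)'.

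The main obstacle will be the verification of (iv)'. Writing $a := x^*$, $a' := F_X^*(x^*)$, $b := y^*$, $b' := F_Y^*(y^*)$, the key identity
\[
a \otimes b - a' \otimes b' = (a - a') \otimes b + a' \otimes (b - b')
\]
combined with the multiplicativity $\|a' \otimes b'\| = \|a'\| \|b'\|$ of the injective dual norm yields, after some care with parameters, that the simultaneous membership $x^* \in V_1^{F_X}$ and $y^* \in V_1^{F_Y}$ (the analogues of $V_1$ for $F_X$ and $F_Y$ with appropriately scaled $\eps$-parameters) implies $x^* \otimes y^* \in V_1$. Contrapositively, $z^* \in \Gamma \setminus V_1$ forces either $x^* \notin V_1^{F_X}$ or $y^* \notin V_1^{F_Y}$; in the former case, property (iv)' applied to $X$ gives $|x^*(F_X(e_X))| \leq \rho_X(x_1^*) \leq \rho(z_1^*)$, and combining with the trivial bound $|y^*(F_Y(e_Y))| \leq 1$ produces
\[
|z^*(F(e))| = |x^*(F_X(e_X))| \cdot |y^*(F_Y(e_Y))| \leq \rho(z_1^*),
\]
and symmetrically in the latter case. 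Balancing the $\eps$-parameters so that these estimates close up cleanly is the most technical part of the argument.
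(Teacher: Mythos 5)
Your proposal is correct and follows essentially the same route as the paper: the same $1$-norming set $\phi(\Gamma_X\times\Gamma_Y)$ and $\rho=\max\{\rho_X,\rho_Y\}$, the Tseitlin/Ruess--Stegall factorization $\Ext(B_{(X\iten Y)^*})=\Ext(B_{X^*})\otimes\Ext(B_{Y^*})$ to define $\Gamma_{e^*}$, rectangular shrinking of $U$, and the tensorized data $V_X\otimes V_Y$, $x_1^*\otimes y_1^*$, $e_X\otimes e_Y$, $F_X\otimes F_Y$. The only difference is that the paper delegates the verification of (i)'--(vi)' to the argument of \cite[Theorem 4.12]{CGKS2018}, whereas you spell out the key estimates (notably the splitting $a\otimes b-a'\otimes b'=(a-a')\otimes b+a'\otimes(b-b')$ for (iv)'), which is exactly the content of that cited computation.
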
 

\begin{proof}
Take a $1$-norming subset $\Gamma_X \subseteq S_{X^*}$, $\Gamma_Y \subseteq S_{Y^*}$ and functions $\rho_X : \Gamma_X \rightarrow [0,1)$, $\rho_Y : \Gamma_Y \rightarrow [0,1)$ accordingly following the Definition \ref{def:qACK}. Define the map 
\[
\phi : (B_{X^*}, w^*) \times (B_{Y^*}, w^*) \longrightarrow (B_{(X \widehat{\otimes}_\eps Y)^*}, w^*) 
\] 
by $\phi (x^*, y^*) = x^* \otimes y^*$ for every $x^* \in B_{X^*}$ and $y^* \in B_{Y^*}$. Then $\phi$ is continuous. Consider the set $\Gamma := \phi (\Gamma_X \times \Gamma_Y)$ and define $\rho : \Gamma \rightarrow [0,1)$ by $\rho ( \phi (x^*, y^*)) = \max \{ \rho_X (x^*), \rho_Y (y^*)\}$. 

To see that $\Gamma$ and $\rho$ are the desired ones, let $e^* \in \operatorname{Ext} ( B_{(X \widehat{\otimes}_\eps Y)^*} )$ be given. By Tseitlin \cite{T} or Ruess-Stegall \cite{RS1982}, $\operatorname{Ext} ( B_{(X \widehat{\otimes}_\eps Y)^*} ) = \operatorname{Ext} (B_{X^*}) \otimes \operatorname{Ext} (B_{Y^*})$; hence $e^* = e_X^* \otimes e_Y^*$ for some $e_X^* \in \operatorname{Ext} (B_{X^*})$ and $e_Y^* \in \operatorname{Ext} (B_{Y^*})$. Thus, we can find $\Gamma_{e_X^*} \subseteq \Gamma_X$ and $\Gamma_{e_Y^*} \subseteq \Gamma_Y$ satisfying the conditions (i)-(iii) in Definition \ref{def:qACK}. Note that if nets $(x_\alpha^*) \subseteq \Gamma_{e_X^*}$ and $(y_\beta^*) \subseteq \Gamma_{e_Y^*}$ satisfy that $(c_\alpha x_\alpha^*)$ and $(d_\beta y_\beta^*)$ converge weak-star to $e_X^*$ and $e_Y^*$ for some $(c_\alpha)$ and $(d_\beta) \subseteq \mathbb{T}$, respectively, then $c_\alpha d_\beta \phi (x_\alpha^*, y_\beta^*)$ converges weak-star to $e^*$. This shows that $e^* \in \overline{\mathbb{T} \phi (\Gamma_{e_X^*} \times \Gamma_{e_Y^*} ) }^{w^*}$. Putting $\Gamma_{e^*} := \phi (\Gamma_{e_X^*} \times \Gamma_{e_Y^*} )$, we have that 
\[
\sup \{ \rho (z^*) : z^* \in \Gamma_{e^*} \} \leq \max \bigl\{ \sup \{ \rho_{X} (x^*) : x^* \in \Gamma_{e_X^*}\},\, \sup \{ \rho_{Y} (y^*) : y^* \in \Gamma_{e_Y^*}\}\bigr\} < 1. 
\]

It remains to check that $\Gamma_{e^*}$ satisfies the condition (iii). Let $\eps >0$ and a non-empty relatively $w^*$-open subset $U \subseteq \Gamma_{e^*}$ be given. Pick $x_0^* \in \Gamma_{e_X^*}$ and $y_0^* \in \Gamma_{e_Y^*}$ so that $\phi (x_0^*, y_0^*) \in U$. Using the continuity of $\phi$, find relatively $w^*$-open neighborhoods $W_X \subseteq \Gamma_{e_X^*}$ and $W_Y \subseteq \Gamma_{e_Y^*}$ of $x_0^*$ and $y_0^*$, respectively, such that $\phi(W_X\times W_Y) \subseteq U$. Now, applying the definition of quasi-$\ACK$ to $X$ and $Y$ with respect to $\eps/2$, $W_X$ and $W_Y$, we obtain 
\begin{align*}
&V_X \subseteq W_X, x_1^* \in V_X, e_X \in S_X \text{ and } F_X \in \mathcal{L} (X), \\
&V_Y \subseteq W_Y, y_1^* \in V_Y, e_Y \in S_Y \text{ and } F_Y \in \mathcal{L} (Y). 
\end{align*} 
Define the set $V \subseteq U$, $z_1^* \in V$, $e \in S_{X \iten Y}$ and $F \in \mathcal{L} (X \iten Y)$ as follows: $V:= \phi (V_X \times V_Y)$, $z_1^* := \phi(x_1^*, y_1^*)$, $e:=e_X \otimes e_Y$ and $F:=F_X \otimes F_Y$. Arguing as in the proof of \cite[Theorem 4.12]{CGKS2018}, we can conclude that $V, z_1^*, e$ and $F$ satisfy (i)'-(vi)'. 
\end{proof}

It is well known that $C(K,Z) = C(K) \iten Z$ for a compact Hausdorff space $K$ and a Banach space $Z$. Thus, we have the following immediate corollary which was already covered by Corollary \ref{cor:unifalgebra}.

\begin{cor}
Let $K$ be a compact Hausdorff space. If $Z$ has quasi-$\ACK$ structure, then $C(K,Z)$ has quasi-$\ACK$ structure.
\end{cor}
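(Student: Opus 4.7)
The plan is to derive this corollary directly from Theorem \ref{thm:iten} combined with the standard tensorial representation of the $C(K)$-valued function space.

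First, I would recall that $C(K)$ itself is a uniform algebra on $K$ (taking $A = C(K)$), so by the result of Cascales et al.\ \cite{CGKS2018}, $C(K)$ has $\ACK_\rho$ structure with parameter $\rho = 0$. In particular, by Remark \ref{rem:basic}(a), $C(K)$ has quasi-$\ACK$ structure. Second, I would invoke the standard isometric identification $C(K,Z) = C(K) \iten Z$, which holds for every compact Hausdorff space $K$ and every Banach space $Z$ (see, e.g., \cite{Ryan}). With these two ingredients in hand, since $Z$ is assumed to have quasi-$\ACK$ structure, Theorem \ref{thm:iten} applied to the pair $C(K)$ and $Z$ yields that $C(K) \iten Z$, and hence $C(K,Z)$, has quasi-$\ACK$ structure.

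There is no real obstacle: the content is packaged entirely in the two results I am quoting. The only tiny care required is to note that one of the two ingredients (namely that $C(K)$ itself has quasi-$\ACK$ structure) is not stated as a separate lemma in this paper; if desired, I would recall in a single sentence that $C(K)$ is a uniform algebra and cite \cite[Theorem 4.16 or Example]{CGKS2018} to legitimize the use. Everything else is an immediate chain of implications, so the proof would be essentially a one-line derivation.
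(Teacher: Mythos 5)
Your proposal is correct and follows exactly the paper's intended derivation: the corollary is stated immediately after Theorem \ref{thm:iten} precisely to be read off from the identification $C(K,Z)=C(K)\iten Z$ together with the fact that $C(K)$, being a uniform algebra (indeed having $\ACK$ structure by \cite{CGKS2018}, hence quasi-$\ACK$ structure by Remark \ref{rem:basic}), can serve as one factor in the tensor product. The paper also observes that the same statement was already obtained via Corollary \ref{cor:unifalgebra}(a), but your route is the one the paper itself presents at this point.
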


We checked in Remark \ref{rem:basic} that a Banach space with property quasi-$\beta$ has quasi-$\ACK$ structure, thus Theorem \ref{thm:iten} can be applied to such a space. However, the next result shows that property quasi-$\beta$ (or property $\beta$) is actually stable under injective tensor products. Up to our knowledge, this result has not appeared in the literature. 

\begin{prop}\label{prop:beta-iten}
Let $X$ and $Y$ be Banach spaces.
\begin{enumerate}
\setlength\itemsep{0.3em}
\item[\textup{(a)}] If $X$ and $Y$ have property $\beta$, then $X \iten Y$ has property $\beta$,
\item[\textup{(b)}] If $X$ and $Y$ have property quasi-$\beta$, then $X \iten Y$ has property quasi-$\beta$.
\end{enumerate} 
\end{prop}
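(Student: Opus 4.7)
The plan is to build witnesses for property (quasi-)$\beta$ of $X \iten Y$ directly from those of $X$ and $Y$ by taking tensor products. Let $\{x_\alpha^*\}_{\alpha \in \Lambda_X} \subseteq S_{X^*}$, $\{x_\alpha\}_\alpha \subseteq S_X$, $\rho_X$ witness the property for $X$, and analogously $\{y_\beta^*\}_\beta \subseteq S_{Y^*}$, $\{y_\beta\}_\beta \subseteq S_Y$, $\rho_Y$ for $Y$. Set
\[
A := \{x_\alpha^* \otimes y_\beta^* : \alpha \in \Lambda_X,\ \beta \in \Lambda_Y\} \subseteq S_{(X \iten Y)^*}
\]
with corresponding unit vectors $x_\alpha \otimes y_\beta \in S_{X \iten Y}$, and $\rho(x_\alpha^* \otimes y_\beta^*) := \max\{\rho_X(x_\alpha^*), \rho_Y(y_\beta^*)\}$ (a constant less than $1$ in case (a)). Conditions (i) and (ii) of the definition reduce to direct computation: $(x_\alpha^* \otimes y_\beta^*)(x_\alpha \otimes y_\beta)=1$, and for $(\alpha,\beta)\neq (\alpha',\beta')$, the product $|x_{\alpha'}^*(x_\alpha)|\cdot|y_{\beta'}^*(y_\beta)|$ is bounded by $\max\{\rho_X(x_{\alpha'}^*),\rho_Y(y_{\beta'}^*)\}= \rho(x_{\alpha'}^* \otimes y_{\beta'}^*)$, because at least one of $\alpha\neq\alpha'$ or $\beta\neq\beta'$ holds while the remaining factor is at most $1$.

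For part (a), since $\rho$ is uniformly bounded below $1$, the remaining task is to show that $A$ is norming on $X \iten Y$. The idea is to view any $u \in X \iten Y$ as the weak-star-to-weak continuous operator $\tilde u : X^* \to Y$ with $\|\tilde u\|=\|u\|_\eps$ and $\langle u, x^*\otimes y^*\rangle = y^*(\tilde u(x^*))$. Given $\delta>0$, choose $x^*\in B_{X^*},\,y^*\in B_{Y^*}$ nearly attaining $\|u\|_\eps$. Since $\{x_\alpha^*\}$ is norming, $\overline{\operatorname{aco}}^{w^*}\{x_\alpha^*\} = B_{X^*}$, and the functional $x^*\mapsto y^*(\tilde u(x^*))$ is $w^*$-continuous; a standard approximation by absolutely convex combinations then lets us replace $x^*$ by some $x_\alpha^*$ at the cost of a small error. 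Repeating the same procedure in the $Y$-slot, using $w^*$-continuity of $y^*\mapsto y^*(\tilde u(x_\alpha^*))$, produces $y_\beta^*$ so that $|\langle u, x_\alpha^*\otimes y_\beta^*\rangle|$ is close to $\|u\|_\eps$.

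For part (b), given $e^* \in \Ext(B_{(X \iten Y)^*})$, the Ruess--Stegall theorem (already invoked in the proof of Theorem \ref{thm:iten}) yields a factorization $e^* = e_X^* \otimes e_Y^*$ with $e_X^* \in \Ext(B_{X^*})$ and $e_Y^* \in \Ext(B_{Y^*})$. Pick $A_{e_X^*} \subseteq \{x_\alpha^*\}$ and $A_{e_Y^*} \subseteq \{y_\beta^*\}$ witnessing quasi-$\beta$ of $X$ and $Y$ respectively, and set $A_{e^*} := \{x^* \otimes y^* : x^* \in A_{e_X^*},\, y^* \in A_{e_Y^*}\}$. Then $\sup \rho(A_{e^*}) \le \max\{\sup\rho_X(A_{e_X^*}),\,\sup\rho_Y(A_{e_Y^*})\}<1$, and by the joint $w^*$-continuity of $\phi:(x^*,y^*)\mapsto x^*\otimes y^*$ used in Theorem \ref{thm:iten}, combining $w^*$-approximating nets from $\mathbb{T}A_{e_X^*}$ and $\mathbb{T}A_{e_Y^*}$ for $e_X^*$ and $e_Y^*$ produces a net in $\mathbb{T} A_{e^*}$ converging $w^*$ to $e^*$.

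The main obstacle is the norming verification in part (a): the evaluation $\langle u,\cdot\rangle$ is $w^*$-continuous in each tensor slot separately but typically not jointly, so the substitution must be performed in two successive steps with careful control of the error accumulated in each slot. Everything else is either a direct computation or an immediate consequence of Ruess--Stegall together with the continuity of $\phi$ already exploited in Theorem \ref{thm:iten}.
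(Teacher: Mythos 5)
Your proposal is correct and follows essentially the same route as the paper: the same tensorized witness family $\{(x_\alpha\otimes y_\beta,\,x_\alpha^*\otimes y_\beta^*)\}$ with $\rho=\max\{\rho_X,\rho_Y\}$, the same two-step slot-by-slot norming argument via the identification of $u$ with a $w^*$-$w$-continuous operator $X^*\to Y$, and the same use of the Tseitlin/Ruess--Stegall description of $\Ext(B_{(X\iten Y)^*})$ together with the continuity of $(x^*,y^*)\mapsto x^*\otimes y^*$ for part (b). No substantive differences to report.
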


\begin{proof}
(a). Let $(x_\alpha, x_\alpha^*) \subseteq S_X \times S_{X^*}$ and $(y_\beta, y_\beta^*) \subseteq S_Y \times S_{Y^*}$ be pairs of vectors and $\rho_1,\rho_2 \in [0,1)$ to be the constants coming from the definition of property $\beta$ of $X$ and $Y$, respectively. Consider the set $\{ (x_\alpha \otimes y_\beta, x_\alpha^* \otimes y_\beta^*) \}$. Note that $|(x_\alpha^* \otimes y_\beta^*) (x_{\alpha'} \otimes y_{\beta'})| \leq \rho:= \max\{\rho_1,\rho_2\} \in [0,1)$ whenever $(\alpha,\beta)\neq(\alpha',\beta')$. Thus, it remains to check that $\{ x_\alpha^* \otimes y_\beta^* \}$ is a norming set. Let $u \in X \iten Y$ and view $u$ as an element of $K_{w^*} (X^*, Y)$, the space of $w^*$-$w$-continuous compact linear operators. Since $B_{X^*} = \overline{\text{aco}} \{x_\alpha^*\}$, we can find $x_\alpha^*$ such that $\| u(x_\alpha^*)\|$ is close to $\|u\|$. Again, since $B_{Y^*} = \overline{\text{aco}} \{y_\beta^*\}$, there is $y_\beta^*$ such that $|y_\beta^* (u (x_\alpha^*))|$ is close to $\|u(x_\alpha^*)\|$. This finishes the proof. 

(b). Let $A_X = \{x_\lambda^* : \lambda \in \Lambda_X\} \subseteq S_{X^*}$, $\{x_\lambda: \lambda \in \Lambda_X\}\subseteq S_X$, $A_Y = \{y_\lambda^* : \lambda \in \Lambda_Y \} \subseteq S_{Y^*}$, $\{y_\lambda: \lambda \in \Lambda_Y\} \subseteq S_Y$, and $\rho_X : A_X \rightarrow \mathbb{R}$, $\rho_Y : A_Y \rightarrow \mathbb{R}$ be the objects witnessing property quasi-$\beta$ of $X$ and $Y$. Define $A := A_X \otimes A_Y$, consider $\{x_\lambda: \lambda \in \Lambda_X\} \otimes \{y_\lambda: \lambda \in \Lambda_Y\}$ and $\rho (x^* \otimes y^*) = \max \{ \rho_X (x^*), \rho_Y (y^*) \}$ for every $x^* \otimes y^* \in A$. Arguing as in Theorem \ref{thm:iten}, we can observe that for every $e^* \in \Ext (B_{(X \iten Y)^*})$, there exist $A_{e_X^*} \subseteq A_X$ and $A_{e_Y^*} \subseteq A_Y$ such that $e^* \in \overline{ \mathbb{T} A_{e_X^*} \otimes A_{e_Y^*}}^{w^*}$ and $\sup \{ \rho (z^*) : z^* \in A_{e_X^*} \otimes A_{e_Y^*} \} < 1$. 
\end{proof}

It is worth mentioning that, up to our knowledge, it is not known whether property \textup{B} and property \textup{B}$^k$ are stable under taking injective tensor products. Nevertheless, we observe that property \textup{B}$^k$ of a reflexive Banach space is inherited to the injective tensor product between itself and an $L_1$-predual space.

\begin{theorem}
Let $X$ be an $L_1$-predual space and $Y$ be a reflexive Banach space with property \textup{B}$^k$, then $X \iten Y$ has property \textup{B}$^k$. 
\end{theorem}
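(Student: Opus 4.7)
The plan is to combine the reflexivity of $Y$, the $L_1$-predual structure of $X$, and property \textup{B}$^k$ of $Y$, in a reduction to a scalar-like argument at an extreme point of $B_{X^*}$. Given $T \in \K(E, X \iten Y)$ with $\|T\|=1$ and $\eps > 0$, I would first use that $X$, being an $L_1$-predual, has the metric approximation property, so the canonical embedding $X \iten Y \hookrightarrow \K(X^*,Y)$ is isometric and each $T(e)$ is a compact, $w^*$-to-norm continuous operator on $B_{X^*}$. The compactness of $T$ then implies that the family $\{T(e) : e \in B_E\}$ is equicontinuous as maps $(B_{X^*}, w^*) \to (Y, \|\cdot\|)$; hence the function $x^* \mapsto \|T(\cdot)(x^*)\|_{\K(E,Y)}$ is convex and $w^*$-continuous on the $w^*$-compact set $B_{X^*}$. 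Bauer's maximum principle then produces a point $x_0^* \in \Ext(B_{X^*})$ at which this function attains its supremum $\|T\|=1$.

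Next, apply property \textup{B}$^k$ of $Y$ to the compact operator $\Psi := T(\cdot)(x_0^*) \in \K(E, Y)$ of norm one: for a prescribed small $\eta > 0$, there exists a norm-attaining $R \in \K(E, Y)$ with $\|R\| = \|R(e_0)\| = 1$ for some $e_0 \in S_E$ and $\|R - \Psi\| < \eta$. To lift $R$ back into $X \iten Y$, I would use that the $L_1$-predual $X$ admits peaking functions at extreme points of $B_{X^*}$: for every $w^*$-neighborhood $U$ of $\mathbb{T} x_0^*$ in $B_{X^*}$ and every $\delta > 0$, there is $f \in S_X$ with $x_0^*(f) = 1$ and $|x^*(f)| \leq \delta$ for $x^* \in B_{X^*} \setminus U$. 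Setting $S := T + f \otimes (R - \Psi) \in \K(E, X \iten Y)$, a direct computation gives $\|S - T\| \leq \eta$, $\|S(e_0)(x_0^*)\| = \|R(e_0)\| = 1$, and $\|S\| \leq 1 + \eta$, so that $S$ is $\eta$-almost norm-attaining at $e_0$.

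Finally, the upgrade from ``$\eta$-almost attaining'' to exact attainment would be handled by iterating this construction with geometrically decaying parameters $\eta_n = \eps / 2^{n+1}$, producing a Cauchy sequence $(T_n) \subseteq \K(E, X \iten Y)$ whose limit $T_\infty$ satisfies $\|T_\infty - T\| < \eps$ and, by a Lindenstrauss-style extraction argument exploiting the reflexivity of $Y$ (which forces every element of $X \iten Y$, viewed as a compact operator $X^* \to Y$, to attain its norm on $B_{X^*}$) together with the compactness of $T_\infty$, attains its norm. The main technical obstacles are twofold: (i) rigorously establishing the peaking function property for extreme points of the dual unit ball of a general $L_1$-predual (classical for $C(K)$-spaces via Urysohn's lemma, but requiring the Choquet--Lazar representation of separable $L_1$-preduals as inductive limits of $\ell_\infty^n$'s in the abstract setting), and (ii) implementing the iteration so that the limit operator strictly attains its norm rather than only almost-attaining it.
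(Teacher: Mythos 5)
Your reduction to a single extreme point $x_0^*\in\Ext(B_{X^*})$ via Bauer's maximum principle is a reasonable opening (and the continuity of $x^*\mapsto\sup_{e\in B_E}\|T(e)(x^*)\|$ does follow from total boundedness of $\{T(e):e\in B_E\}$), but the argument has a fatal gap at the end: the passage from ``$\eta$-almost norm attaining'' to norm attaining. Your perturbation $S=T+f\otimes(R-\Psi)$ satisfies $\|S(e_0)(x_0^*)\|=1$ while $\|S\|\le 1+\eta$, so an iteration is indeed forced; but the Lindenstrauss-type iteration you invoke does not close for an \emph{arbitrary} domain $E$. In the limit you obtain a compact $T_\infty$ with $\|T_\infty-T\|<\eps$ and a sequence $(e_n)\subseteq B_E$ with $\|T_\infty(e_n)\|\to\|T_\infty\|$; compactness of $T_\infty$ gives a norm-convergent subsequence of $(T_\infty(e_n))$, but without weak compactness of $B_E$ there is no way to realize the limit as $T_\infty(e)$ for some $e\in B_E$. (Reflexivity of $Y$ only guarantees that each element of $X\iten Y=\K_{w^*}(X^*,Y)$ attains its norm over $B_{X^*}$, which is not the issue; the issue is attainment over $B_E$.) This is precisely the obstruction that makes property \textup{B}$^k$ nontrivial, and it is the step your sketch leaves open. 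A second, independent gap is the claimed peaking property: that for every $\Ext(B_{X^*})$-point of a general (possibly nonseparable) $L_1$-predual and every $w^*$-neighborhood of $\mathbb{T}x_0^*$ there is $f\in S_X$ with $x_0^*(f)=1$ and $|x^*(f)|\le\delta$ off that neighborhood. This is Urysohn for $C(K)$ but is not established -- and is doubtful -- for general $L_1$-preduals (think of $A(S)$ for the Poulsen simplex, where extreme points are $w^*$-dense); you flag it as an obstacle but the proof cannot stand without it.

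The paper's route avoids both problems by exploiting the $L_1$-predual structure differently: it takes a net of norm-one projections $Q_\lambda$ on $X$ with $Q_\lambda(X)\cong\ell_\infty^{n_\lambda}$ converging strongly to the identity, uses compactness of the linearization $\widetilde{T}\in\Lin(E\pten Y^*,X)$ to replace $T$ by $Q\widetilde{T}$ up to $\eps$, and then dualizes to a compact operator on $E\pten Q(X)^*\cong\ell_1^n(E)$ with values in $Y$ (reflexivity of $Y$ enters here, via $(E\pten Y^*)^*=\Lin(E,Y)$). Property \textup{B}$^k$ of $Y$ is applied to \emph{that} operator, and the $\ell_1$-sum structure forces the approximating norm-attaining operator $R$ to attain its norm at an elementary tensor $\xi_{i_0}\otimes u_{i_0}^*$; lifting back to $\overline{R}:E\to X\iten Y$ then yields \emph{exact} norm attainment at $\xi_{i_0}$ in one step, with no iteration and no peaking functions. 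If you want to salvage your approach, you would need to replace the iteration by a mechanism that produces exact attainment directly, which is essentially what the finite-dimensional $\ell_\infty^n$-reduction accomplishes.
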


\begin{proof}
Let $E$ be a Banach space, $T \in \mathcal{K} (E, X \iten Y)$ and $\eps >0$ be given. Consider the bilinear mapping $B_T : E \times Y^* \rightarrow X$ defined as
\[
B_T (e, y^*) (x^*) = \langle (T(e)) (x^*), y^* \rangle \,\, \text{ for every } (e,y^*) \in E \times Y^* \text{ and } x^* \in X^*, 
\]
noting that $\langle (T(e)) ( \cdot ), y^* \rangle$ is an element of $X$, where $T(e)$ is viewed as a member of $\mathcal{K}_{w^*} (X^*, Y)$. As a matter of fact, we remark that $X \iten Y \stackrel{1}{=} \mathcal{K}_{w^*} (X^*, Y)$ since $X$ is an $L_1$-predual; hence it has the approximation property (see \cite[Corollary 4.13]{Ryan}). 
Denote by $\widetilde{T} \in \Lin ( E \pten Y^*, X)$ the linearization of $B_T$, and observe that $\widetilde{T}$ is compact. Indeed, the relatively compactness of the set $\{ \widetilde{T} (e \otimes y^*) : e \in B_E, y^* \in B_{Y^*} \}$ is obtained from the compactness of $T$ and $w^*$-compactness of $B_{Y^*}$. 

Take a net $(Q_\lambda)$ of norm-one projections on $X$ converging to the identity on $X$ in the strong operator topology such that $Q_\lambda (X) \stackrel{1}{=} \ell_\infty^{n_\lambda}$ for some $n_\lambda \in \mathbb{N}$ (see, for instance, \cite[Chapter 7]{Lacey}). From the compactness of $\widetilde{T}$, we can find $\lambda_0$ so that $\| Q_{\lambda_0} \widetilde{T} - \widetilde{T} \| < \eps$. Put $Q = Q_{\lambda_0}$ and $n = n_{\lambda_0}$ for simplicity.

Consider the operator $(Q \widetilde{T})^* : Q(X)^* \rightarrow (E\pten Y^*)^* \stackrel{1}{=} \Lin (E, Y)$. We claim that the range of $(Q \widetilde{T})^*$ is contained in $\mathcal{K} (E,Y)$. To this end, let $z^* \in Q(X)^*$ and a sequence $(e_k) \subseteq B_E$ be fixed. Passing to a subsequence, we may assume that $(T(e_k))$ converges in norm to some $u \in X \iten Y =  \mathcal{K}_{w^*} (X^*, Y)$. Then for each $y^* \in Y^*$ we have
\begin{align*}
|[ (Q\widetilde{T})^* (z^*) ] (e_k \otimes y^*) - \langle u^* (y^*), Q^* (z^*) \rangle | 
&= |\langle (T(e_k))^* (y^*) - u^*(y^*) , Q^* (z^*) \rangle | \\
&\leq \| (T(e_k))^* - u^* \| \| y^* \| \| Q^* (z^*) \| \longrightarrow 0
\end{align*} 
as $k$ tends to $\infty$. This implies that 
\[
\| [ (Q\widetilde{T})^* (z^*) ] (e_k \otimes \cdot ) - \langle u^* (\cdot), Q^* (z^*) \rangle \| \longrightarrow 0;
\]
hence $(Q\widetilde{T})^* (z^*)$ belongs to $\mathcal{K} (E,Y)$.

Next, consider the bilinear mapping $S : E \times Q(X)^* \rightarrow Y$ defined as 
\[
S(e,z^*) = [ (Q\widetilde{T})^* (z^*) ] (e \otimes \cdot ) \,\, \text{ for every } (e,z^*) \in E \times Q(X)^*. 
\]
Since $Q(X)^*$ is finite dimensional and the range of $(Q\widetilde{T})^*$ is contained in $\mathcal{K}(E,Y)$, the linearization $\widetilde{S} \in \Lin (E \pten Q(X)^*, Y)$ of $S$ is compact. By the assumption, there exists $R \in \mathcal{K} (E \pten Q(X)^*, Y)$ such that $\| R - \widetilde{S} \| < \eps$ and $R$ attains its norm. Observe the identification $E \pten Q(X)^* \stackrel{1}{=} \ell_1^n (E)$ and let us denote by $(u_i^*)_{i=1}^n$ the basis for $Q(X)^*$ isometrically equivalent to the canonical basis for $\ell_1^n$. Let $\mu = \sum_{i=1}^n \lambda_i \xi_i \otimes u_i^*$, where $(\xi_i)_{i=1}^n \subseteq S_E$ and $(\lambda_i)_{i=1}^n \subseteq \mathbb{K}$, such that $\|R(\mu)\| = \|R\|$ and $\|\mu\| = \sum_{i=1}^n |\lambda| = 1$. This implies that there exists at least one $i_0 \in \{1,\ldots, n\}$ such that $\| R(\xi_{i_0}\otimes u_{i_0}^*) \| = \|R\|$. 

Finally, let us consider $\overline{R} : E \rightarrow \mathcal{K}_{w^*} (X^*, Y) \stackrel{1}{=} X \iten Y$ defined as 
\[
(\overline{R} (e)) (x^*) = R(e \otimes x^* \vert_{Q(X)}) \,\, \text{ for every } e \in E \text{ and } x^* \in X^*.
\] 
It is clear that $\| \overline{R} \| = \|\overline{R} (\xi_{i_0})\|$. Moreover, for every $e \in B_E$, 
\begin{align*}
\| \overline{R} (e) - T(e) \| &= \sup_{x^* \in B_{X^*} } \| R(e\otimes x^* \vert_{Q(X)}) - \langle \widetilde{T} (e \otimes \cdot), x^* \rangle \| \\
&\leq \sup_{x^* \in B_{X^*} } \| R(e\otimes x^* \vert_{Q(X)}) - \langle Q\widetilde{T} (e \otimes \cdot), x^* \vert_{Q(X)} \rangle \| + \eps \\
&= \sup_{x^* \in B_{X^*} } \| R(e\otimes x^* \vert_{Q(X)}) - S(e, x^* \vert_{Q(X)}) \|  + \eps \leq 2\eps; 
\end{align*} 
hence $\|\overline{R} - T \| \leq 2\eps$. It remains to prove that $\overline{R}$ is compact. Let $(e_k) \subseteq B_E$ be fixed. Passing to a subsequence, we may assume that $\sup_{1\leq i \leq n} \| R(e_k \otimes u_i^*) - y_i \| \rightarrow 0$ as $k \rightarrow \infty$, for some $y_1,\ldots, y_n \in Y$. Define the operator $G \in \mathcal{K}_{w^*} (X^*, Y)$ by $G(x^*) = \sum_{i=1}^n x^* (u_i) y_i$, where $(u_i)_{i=1}^n \subseteq Q(X)$ are the coordinate functionals to $(u_i^*)_{i=1}^n$. Observe that $(\overline{R} (e_k))$ converges to $G$ as $k \rightarrow \infty$, which implies that $\overline{R}$ is compact.
\end{proof} 

\begin{cor}
Let $Y$ be a reflexive Banach space. If $Y$ has property \textup{B}$^k$, then so does $C(K,Y)$. 
\end{cor}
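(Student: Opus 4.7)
The plan is to observe that this corollary is essentially an immediate specialization of the preceding theorem once we identify $C(K,Y)$ with an injective tensor product involving an $L_1$-predual.

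First, I would recall the standard isometric identification $C(K) \iten Y \stackrel{1}{=} C(K,Y)$, valid for any compact Hausdorff space $K$ and any Banach space $Y$; this is one of the basic examples in \cite{Ryan}. Next, I would note that $C(K)$ is an $L_1$-predual space, since its dual is isometric to an $L_1(\mu)$-space by the Riesz representation theorem.

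With these two ingredients in hand, the hypotheses of the preceding theorem are met when we take $X := C(K)$: namely, $X$ is an $L_1$-predual and $Y$ is a reflexive Banach space enjoying property \textup{B}$^k$. Applying that theorem yields that $X \iten Y$ has property \textup{B}$^k$, and the identification above translates this into the assertion that $C(K,Y)$ has property \textup{B}$^k$.

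There is essentially no obstacle here; the whole content of the corollary is the recognition that $C(K)$ supplies an $L_1$-predual for which $C(K)\iten Y$ is the familiar vector-valued continuous function space. One minor point worth double-checking is that no separability or metrizability assumption is implicitly required in the previous theorem's proof (the use of a net of finite-rank projections $(Q_\lambda)$ with $Q_\lambda(X)\stackrel{1}{=}\ell_\infty^{n_\lambda}$ works for any $L_1$-predual, so no such assumption enters), after which the corollary follows in one line.
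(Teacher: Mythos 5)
Your proof is correct and is exactly the argument the paper intends: the corollary follows by taking $X = C(K)$ (an $L_1$-predual) in the preceding theorem and using the standard identification $C(K)\iten Y \stackrel{1}{=} C(K,Y)$. The paper leaves this as an immediate consequence without written proof, so your proposal matches its approach.
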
 

\subsection{$c_0$- or $\ell_\infty$-sum} 

We observed in Corollary \ref{cor:unifalgebra} that $c_0 (Z)$ has quasi-$\ACK$ structure whenever a Banach space $Z$ has quasi-$\ACK$ structure. First, we observe in a direct way that this result can be extended to a general $c_0$-sum of arbitrary Banach spaces with quasi-$\ACK$ structure. We start with the following well known result (see, for instance, \cite[Lemma 1.5]{HWW}). 

\begin{lemma}\label{lem:ext_ell1}
Let $I$ be an arbitrary set and $\{X_i\}_{i \in I}$ be a family of Banach spaces for each $i \in I$. Then 
\[
\Ext \left(B_{(\oplus_{i\in I} X_i)_{\ell_1}} \right) = \bigcup_{i \in I} \Ext (B_{X_i}), 
\]
where $\Ext (B_{X_i})$ is understood as a subset of $(\oplus_{i\in I} X_i)_{\ell_1}$. 
\end{lemma}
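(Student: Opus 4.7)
The plan is to establish the two inclusions separately. Write $X := (\oplus_{i \in I} X_i)_{\ell_1}$ and view each $X_j$ as the subspace of $X$ consisting of tuples supported at coordinate $j$, so that $\Ext(B_{X_j})$ sits inside $S_X$ under this canonical embedding.

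For the inclusion $\bigcup_{i \in I} \Ext(B_{X_i}) \subseteq \Ext(B_X)$, I would fix $j \in I$ and $e \in \Ext(B_{X_j})$, and then suppose $e = \tfrac{1}{2}(u+v)$ for some $u = (u_i), v = (v_i) \in B_X$. The triangle inequality at once forces $\|u\| = \|v\| = 1$. For each $i \neq j$ the identity $u_i + v_i = 0$ gives $\|u_i\| = \|v_i\|$; setting $\alpha := \sum_{i \neq j} \|u_i\|$, the $\ell_1$-norm then yields $\|u_j\| = \|v_j\| = 1 - \alpha$, while $2 = \|u_j + v_j\| \leq \|u_j\| + \|v_j\| = 2(1-\alpha)$ forces $\alpha = 0$. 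Hence both $u$ and $v$ are supported at $j$, and extremality of $e$ in $B_{X_j}$ forces $u_j = v_j = e$.

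For the reverse inclusion, suppose $e = (e_i) \in \Ext(B_X)$. The main step is to prove $e$ has exactly one nonzero coordinate by producing an explicit nontrivial convex decomposition whenever two distinct coordinates $j, k$ satisfy $e_j, e_k \neq 0$. Setting $\alpha := \|e_j\| \geq \|e_k\| =: \beta$ (arranged by symmetry), I would define $u$ to agree with $e$ off $\{j,k\}$ and put $u_j := \tfrac{\alpha+\beta}{\alpha} e_j$, $u_k := 0$, and then $v := 2e - u$. A routine $\ell_1$-norm computation gives $\|u\| = \|v\| = 1$, while $u \neq v$ because $u_k = 0 \neq 2 e_k = v_k$, contradicting extremality. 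Thus $e$ is supported at a single index $j$ with $\|e_j\| = 1$, and lifting any convex decomposition of $e_j$ in $B_{X_j}$ coordinate-wise to $X$ forces $e_j \in \Ext(B_{X_j})$. The main obstacle is really just the norm bookkeeping in this explicit decomposition, in particular confirming $u \neq v$ in every subcase (including $\alpha = \beta$, where $v_j = 0$ but $v_k = 2e_k \neq 0$ still ensures $u \neq v$); every other step is a direct manipulation in the $\ell_1$-sum.
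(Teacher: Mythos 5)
Your argument is correct. Note, however, that the paper does not actually prove Lemma \ref{lem:ext_ell1}: it is stated as a well-known fact with a citation to \cite[Lemma 1.5]{HWW}, so there is no in-paper proof to compare against. Your two inclusions are the standard direct verification and both check out. For $\bigcup_i \Ext(B_{X_i}) \subseteq \Ext(B_X)$, the chain $1=\|e\|\le\tfrac12(\|u\|+\|v\|)\le 1$, the cancellation $v_i=-u_i$ off the supporting coordinate, and the inequality $2=\|u_j+v_j\|\le 2(1-\alpha)$ correctly force $\alpha=0$ and reduce to extremality in $B_{X_j}$. For the reverse inclusion, your decomposition $u_j=\tfrac{\alpha+\beta}{\alpha}e_j$, $u_k=0$, $v=2e-u$ gives $\|u\|=(1-\alpha-\beta)+(\alpha+\beta)=1$ and $\|v\|=(1-\alpha-\beta)+(\alpha-\beta)+2\beta=1$ (using $\alpha\ge\beta$ so that $\tfrac{\alpha-\beta}{\alpha}\ge 0$), and $u_k=0\ne 2e_k=v_k$ guarantees nontriviality in every subcase, so an extreme point must be supported at a single coordinate; the final coordinate-wise lifting is immediate. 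The only (harmless) degenerate point is that the stated identity tacitly assumes each $X_i\ne\{0\}$, since $\Ext(B_{\{0\}})=\{0\}$ would otherwise contribute $0$ to the right-hand side; this convention issue is present in the statement itself and is not a defect of your proof.
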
 

\begin{theorem}\label{thm:c0sum}
Let $I$ be an arbitrary set and $\{X_i\}_{i \in I}$ be a family of Banach spaces with quasi-$\ACK$ structure for each $i \in I$. Then $Z := ( \oplus_{i \in I} X_i )_{c_0}$ has quasi-$\ACK$ structure. 
\end{theorem}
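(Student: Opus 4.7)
The plan is to construct the witnessing data for quasi-$\ACK$ structure of $Z$ by gluing together the data from each factor $X_i$. For each $i \in I$, let $\Gamma_i \subseteq B_{X_i^*}$ and $\rho_i : \Gamma_i \to [0,1)$ witness the quasi-$\ACK$ structure of $X_i$. Using the canonical isometric identification $Z^* = (\oplus_{i \in I} X_i^*)_{\ell_1}$, each $X_i^*$ embeds into $Z^*$ via its coordinate inclusion, and I would set $\Gamma := \bigcup_{i \in I} \Gamma_i \subseteq B_{Z^*}$ with $\rho : \Gamma \to [0,1)$ defined by $\rho|_{\Gamma_i} := \rho_i$. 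That $\Gamma$ is $1$-norming is routine from $\|z\| = \sup_i \|z_i\|$ for $z = (z_i)_{i \in I} \in Z$.

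Given $e^* \in \Ext(B_{Z^*})$, Lemma \ref{lem:ext_ell1} furnishes an index $i_0 \in I$ with $e^* \in \Ext(B_{X_{i_0}^*})$, so I would set $\Gamma_{e^*} := \Gamma_{i_0, e^*}$, where $\Gamma_{i_0, e^*}$ is the set witnessing quasi-$\ACK$ structure of $X_{i_0}$ at $e^*$. Condition (i) of Definition \ref{def:qACK} will follow from the $w^*$-to-$w^*$ continuity of the embedding $X_{i_0}^* \hookrightarrow Z^*$ (which is dual to the coordinate projection $Z \to X_{i_0}$), and (ii) is immediate from the choice of $\rho$. For the main condition (iii), given $\eps > 0$ and a nonempty relatively $w^*$-open $U \subseteq \Gamma_{e^*}$, I would first note that the two subspace $w^*$-topologies on $\Gamma_{i_0, e^*}$ inherited from $Z^*$ and from $X_{i_0}^*$ coincide, so quasi-$\ACK$ structure of $X_{i_0}$ applied to $\eps$ and $U$ produces $V_0 \subseteq U$, $x_1^* \in V_0$, $e_{i_0} \in S_{X_{i_0}}$ and $F_{i_0} \in \Lin(X_{i_0})$ satisfying (i)'-(vi)' for $X_{i_0}$.

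I would then lift these to $Z$ by taking $V := V_0$, keeping $x_1^*$, letting $e \in S_Z$ be the vector with $e_{i_0}$ in the $i_0$-coordinate and $0$ elsewhere, and defining $F \in \Lin(Z)$ to act as $F_{i_0}$ on the $i_0$-coordinate and vanish on the others. A short computation then gives $F^*(x^*) = F_{i_0}^*(x_{i_0}^*)$ (seen inside $X_{i_0}^* \subseteq Z^*$) for any $x^* = (x_j^*)_{j \in I} \in Z^*$, so that (i)', (ii)', (iii)' follow immediately from the corresponding $X_{i_0}$-properties. The main obstacle I anticipate is (iv)', since $V_1$ is defined over the whole of $\Gamma$, including $\Gamma_j$ for $j \neq i_0$. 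The key observation is that for $x^* \in \Gamma_j$ with $j \neq i_0$ one has $F^*(x^*) = 0$, whence $\|F^*(x^*)\| + (1-\eps)\|x^* - F^*(x^*)\| = 1-\eps < 1$, placing such $x^*$ automatically in $V_1$. Consequently $\Gamma \setminus V_1 \subseteq \Gamma_{i_0} \setminus V_1^{X_{i_0}}$, where $V_1^{X_{i_0}}$ is the analogous set associated to $F_{i_0}$, and (iv)' reduces to its $X_{i_0}$-counterpart via $\rho(x_1^*) = \rho_{i_0}(x_1^*)$. Conditions (v)' and (vi)' transfer in the same spirit: on $\Gamma_{i_0}$ they are inherited directly, on $\Gamma_j$ with $j \neq i_0$ the identity $F^*(x^*) = 0 \in \operatorname{aco}\{0, V\}$ makes (v)' trivial, and (vi)' only involves $v^* \in V \subseteq \Gamma_{i_0}$, so it reduces at once to the $X_{i_0}$-case.
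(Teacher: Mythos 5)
Your proposal is correct and follows essentially the same route as the paper's proof: identify $Z^*$ with the $\ell_1$-sum of the $X_i^*$, take $\Gamma$ to be the union of the coordinate-embedded $\Gamma_i$, use Lemma \ref{lem:ext_ell1} to locate each extreme point in a single summand, and lift $V$, $x_1^*$, $e$, $F$ coordinatewise via the canonical injection and projection. Your observation that $F^*$ annihilates $\Gamma_j$ for $j \neq i_0$, so that those functionals land automatically in $V_1$, is in fact a slightly cleaner disposal of condition (iv)' than the one written in the paper, which handles that case by a direct (vacuous) estimate instead.
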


\begin{proof}
For each $i \in I$, let us denote by $P_i$ and $J_i$ the canonical projection from $Z$ to $X_i$ and the canonical injection from $X_i$ to $Z$, respectively. Let $\Gamma_i \subseteq B_{X_i^*}$ and $\rho_i : \Gamma_i \rightarrow [0,1)$ be the ones that witness quasi-$\ACK$ structure of $X_i$ for each $i \in I$. Define the set $\Gamma \subseteq B_{Z^*}$ and the function $\rho : \Gamma \rightarrow [0,1)$ as follows:
\begin{align*}
&\Gamma := \bigcup_{i\in I} P_i^* (\Gamma_i), \\
&\rho (P_i^* (x^*)) := \rho_i (x^*) \,\, \text{ for each } i \in I \text{ and } x^* \in \Gamma_i. 
\end{align*} 
Then it is clear that $\Gamma$ is a $1$-norming set. For a fixed $e^* \in B_{Z^*}$, by Lemma \ref{lem:ext_ell1}, there exists $i_0 \in I$ such that $e^* = P_{i_0}^* (e_{i_0}^*)$ for some $e_{i_0}^* \in \Ext (B_{X_{i_0}^*})$. Take a subset $\Gamma_{e_{i_0}^*} \subseteq \Gamma_{i_0}$ so that $e_{i_0}^* \in \overline{\mathbb{T} \Gamma_{e_{i_0}^*} }^{w^*}$. Then it is clear that $e^* \in \overline{ \mathbb{T} P_{i_0}^* (\Gamma_{e_{i_0}^*}) }^{w^*}$. Set $\Gamma_{e^*} := P_{i_0}^* (\Gamma_{e_{i_0}^*}) \subseteq \Gamma$ and note that 
\[
\sup \{ \rho (u^*) : u^* \in \Gamma_{e^*} \} \leq \sup \{ \rho_{i_0} (x^*) : x^* \in \Gamma_{e_{i_0}^*} \} < 1. 
\]

Now, let $\eps >0$ and $U$ be a nonempty relatively $w^*$-open subset of $\Gamma_{e^*}$. Note that $J_{i_0}^* (U)$ is a nonempty relatively $w^*$-open subset of $J_{i_0}^* (\Gamma_{e^*}) = \Gamma_{e_{i_0}^*}$. Thus, we can find a nonempty subset $V_{i_0} \subseteq J_{i_0}^* (U), x_{i_0}^* \in V_{i_0}, e_{i_0} \in S_{X_{i_0}}$ and $F_{i_0} \in \mathcal{L} (X_{i_0})$ and our claim is to show that
\begin{align*}
&V:= P_{i_0}^* (V_{i_0}), \,\, z_1^* := P_{i_0}^* (x_{i_0}^*), \,\, e:=J_{i_0} (e_{i_0}) \,\, \text{ and } \,\, F:= J_{i_0} F_{i_0} P_{i_0}
\end{align*}
are the desired elements corresponding to quasi-$\ACK$ structure of $Z$. Observe that $V$ is nonempty since $V_{i_0}$ is nonempty and elements in $P_{i_0}^*(V_{i_0})$ are linear extensions of elements in $V_{i_0}$ to $Z$. 

It is clear that $\|F\| = \|F(e)\| =1$, $z_1^* (F(e)) = x_{i_0}^* (F_{i_0} (e_{i_0}) ) =1$ and $F^* (z_1^*) = z_1^*$. Moreover, if $x^* \in V_{i_0, 1}$ (the subset of $\Gamma_{i_0}$ which appears in the property (iv)'), then 
\begin{align*}
\| F^* ( P_{i_0}^* (x^*)) \| &+ (1-\eps) \| (I_{Z^*} - F^*) (P_{i_0}^* (x^*)) \| \\
&= \| F_{i_0}^* (x^*) \| + (1-\eps) \|( I_{X_{i_0}^*} - F_{i_0}^* )(x^*) \| \leq 1,
\end{align*} 
which implies that $P_{i_0}^* (x^*) \in V_1 := \{ z^* \in \Gamma : \|F^*(z^*)\| + (1-\eps) \|z^* - F^*(z^*)\| \leq 1 \}$. Thus, if $z^* \in \Gamma \setminus V_1$, then $z^* = P_{i}^*( x_i^*)$ for some $x_i^* \in \Gamma_i$ with $i \neq i_0$, or $z^* = E_{i_0} (u_{i_0}^*)$ with $u_{i_0}^* \notin V_{i_0,1}$. In any case, 
\[
|z^* (F(e))| \leq \rho_{i_0} (x_{i_0}^*) = \rho(z_1^*).
\]
It is immediate to check the properties (v)' and (vi)'; so the proof is finished. 
\end{proof}

\begin{remark}
The absence of an explicit representation of the extremal structure on the unit ball of the dual of $Z := ( \oplus_{i \in I} X_i )_{\ell_\infty}$ makes an investigation of quasi-$\ACK$ structure of $Z$ difficult. We do not know whether $Z$ has quasi-$\ACK$ structure when each $X_i$ has quasi-$\ACK$ structure for each $i \in I$. 
\end{remark}

A very similar proof (but simpler) to the one of Theorem \ref{thm:c0sum} shows the following, which improves \cite[Theorem 4.11 and Corollary 4.19]{CGKS2018} at once. 

\begin{prop}\label{prop:intermediate}
Let $I$ be an arbitrary set and $\{X_i\}_{i \in I}$ be a family of Banach spaces with $\ACK_{\rho_i}$ structure for each $i \in I$. Suppose that $\rho := \sup_{i \in I} \rho_i < 1$. If $Z$ is a Banach space satisfying $( \oplus_{i \in I} X_i )_{c_0} \subseteq Z \subseteq ( \oplus_{i \in I} X_i )_{\ell_\infty}$, then $Z$ has $\ACK_\rho$ structure. 
\end{prop}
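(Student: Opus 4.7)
The plan is to reuse, essentially verbatim, the construction carried out in the proof of Theorem \ref{thm:c0sum}, with two simplifications: since we are now aiming at $\ACK_\rho$ structure rather than quasi-$\ACK$ structure, we do not need to single out an auxiliary family $\{\Gamma_{e^*}\}$ indexed by extreme points, and the parameter $\rho$ can be taken as the uniform constant $\sup_i \rho_i < 1$. The starting ingredients are the canonical projection $P_i : Z \to X_i$ (which is well-defined and of norm $1$ because $Z \subseteq (\oplus_i X_i)_{\ell_\infty}$) and the canonical injection $J_i : X_i \to Z$ (well-defined and isometric because $(\oplus_i X_i)_{c_0} \subseteq Z$). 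The crucial structural observation is that $J_i$ being isometric, together with $P_i J_i = I_{X_i}$, forces $P_i^* : X_i^* \to Z^*$ to be an isometric embedding and $J_i^* P_i^* = I_{X_i^*}$. Moreover, since the norm of $Z$ restricted from $(\oplus_i X_i)_{\ell_\infty}$ is $\|z\| = \sup_i \|P_i(z)\|$, the set $\Gamma := \bigcup_{i \in I} P_i^* (\Gamma_i)$ is $1$-norming for $Z$, where $\Gamma_i \subseteq B_{X_i^*}$ witnesses $\ACK_{\rho_i}$ structure.

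Given $\eps > 0$ and a nonempty relatively $w^*$-open subset $U \subseteq \Gamma$, I would pick any $P_{i_0}^*(x_0^*) \in U$, then after shrinking $U$ assume $U \subseteq P_{i_0}^*(\Gamma_{i_0})$. The pull-back $J_{i_0}^*(U)$ is then a nonempty relatively $w^*$-open subset of $\Gamma_{i_0}$, and the $\ACK_{\rho_{i_0}}$ structure of $X_{i_0}$ supplies a nonempty $V_{i_0} \subseteq J_{i_0}^*(U)$, $x_{i_0}^* \in V_{i_0}$, $e_{i_0} \in S_{X_{i_0}}$ and $F_{i_0} \in \mathcal{L}(X_{i_0})$ verifying (i)-(vi). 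I would then set
\[
V := P_{i_0}^* (V_{i_0}), \quad x_1^* := P_{i_0}^*(x_{i_0}^*), \quad e := J_{i_0}(e_{i_0}), \quad F := J_{i_0} F_{i_0} P_{i_0},
\]
and verify that these meet the $\ACK_\rho$ axioms for $Z$.

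Conditions (i)--(iii) are immediate from the identities $P_{i_0} J_{i_0} = I_{X_{i_0}}$, $J_{i_0}^* P_{i_0}^* = I_{X_{i_0}^*}$, and the isometric nature of $J_{i_0}$ and $P_{i_0}^*$. Condition (vi) drops out in the same way, using that $v^*(e) = v^*(J_{i_0}(e_{i_0})) = v_{i_0}^*(e_{i_0})$ for $v^* = P_{i_0}^*(v_{i_0}^*) \in V$. For (v), the key is that $F^* = P_{i_0}^* F_{i_0}^* J_{i_0}^*$: for $z^* = P_i^*(x_i^*) \in \Gamma$ with $i \neq i_0$ one has $F^*(z^*) = 0$, while for $i = i_0$ the isometry of $P_{i_0}^*$ transfers the approximation inside $\operatorname{aco}\{0, V_{i_0}\}$ to one inside $\operatorname{aco}\{0, V\}$.

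The one place that really needs care is condition (iv), and this is the main obstacle. Writing an arbitrary $z^* \in \Gamma \setminus V_1$ as $P_i^*(x_i^*)$, I would split into two cases. When $i \neq i_0$, the identity $P_i J_{i_0} = 0$ forces $z^*(F(e)) = 0 \leq \rho$. When $i = i_0$, the isometric property of $P_{i_0}^*$ together with $J_{i_0}^* P_{i_0}^* = I$ yields the equalities $\|F^*(z^*)\| = \|F_{i_0}^*(x_{i_0}^*)\|$ and $\|z^* - F^*(z^*)\| = \|x_{i_0}^* - F_{i_0}^*(x_{i_0}^*)\|$, so $z^* \notin V_1$ implies $x_{i_0}^* \notin V_{i_0,1}$, and therefore $|z^*(F(e))| = |x_{i_0}^*(F_{i_0}(e_{i_0}))| \leq \rho_{i_0} \leq \rho$. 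This completes the verification and yields the claim.
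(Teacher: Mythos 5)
Your proposal is correct and follows exactly the route the paper intends: the paper gives no separate argument for Proposition \ref{prop:intermediate}, stating only that ``a very similar proof (but simpler) to the one of Theorem \ref{thm:c0sum}'' works, and your adaptation (dropping the $\Gamma_{e^*}$ family, taking $\rho=\sup_i\rho_i$, and using that $P_{i_0}^*$ is a $w^*$-continuous isometry with $J_{i_0}^*P_{i_0}^*=I$ and $J_{i_0}^*P_i^*=0$ for $i\neq i_0$) is precisely that simplification, with the key verification of condition (iv)' handled correctly in both cases.
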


\section*{Acknowledgment}

The authors would like to thank Sun Kwang Kim and Miguel Mart\'in for valuable comments and remarks leading to improvement of this paper. The authors also want to thank anonymous referees for their careful reading and helpful suggestions.

The first author was supported by Basic Science Research Program through the National Research Foundation of Korea (NRF) funded by the Ministry of Education (2022R1A6A3A01086079) and by the Ministry of Education, Science and Technology [NRF-2020R1A2C1A01010377]. The second author was supported by NRF [NRF-2019R1A2C1003857], by POSTECH Basic Science Research Institute Grant [NRF-2021R1A6A1A10042944] and by a KIAS Individual Grant [MG086601] at Korea Institute for Advanced Study.

\end{document}